\newcommand{\R}{\mathbb R}
\newcommand{\N}{\mathbb N}
\newtheorem{thm}{Theorem}[section]
\newtheorem{lemma}[thm]{Lemma}
\newtheorem{proposition}[thm]{Proposition}
\newtheorem{cor}[thm]{Corollary}
\newtheorem{df}{Definition}[section]
\theoremstyle{remark}
\newtheorem*{rmk}{Remark}
\begin{document}


\title{John's ellipsoid and the integral ratio of a log-concave function}

\author[David Alonso, Bernardo Gonz\'alez, C. Hugo Jim\'enez, Rafael Villa]{David Alonso-Guti\'{e}rrez, Bernardo Gonz\'alez Merino, C. Hugo Jim\'enez, Rafael Villa}

\email{alonsod@unizar.es}
\email{bg.merino@tum.de}
\email{carloshugo@us.es}
\email{villa@us.es}

\address{Universidad de Zaragoza}
\address{Technische Universit\"at M\"unchen}
\address{Universidade Federal de Minas Gerais}
\address{Universidad de Sevilla}

\thanks{The first named author is partially supported by Spanish grants MTM2013-42105-P, P1·1B2014-35
projects and by IUMA. The second named author is partially supported by MINECO-FEDER project
reference MTM2012-34037, Spain, and by Consejer\'ia de Industria, Turismo, Empresa e Innovaci\'on de la CARM through Fundaci\'on
S\'eneca, Agencia de Ciencia y Tecnolog\'ia de la Regi\'on de Murcia, Programa de Formaci\'on Postdoctoral de Personal
Investigador.
The third named author is supported by CAPES and IMPA. The forth named author were partially supported by the Spanish Ministry of Science
and Innovation and FEDER Funds of European Union grant MTM2012-30748}
\date{\today}
\begin{abstract}
We extend the notion of John's ellipsoid to the setting of integrable log-concave functions. This will allow us to define the integral ratio of a log-concave function, which will extend the notion of volume ratio, and we will find the log-concave function maximizing the integral ratio. A reverse functional affine isoperimetric inequality will be given, written in terms of this integral ratio. This can be viewed as a stability version of the functional affine isoperimetric inequality.
\end{abstract}
\maketitle
\section{Introduction and notation}
Asymptotic geometric analysis is a rather new branch in mathematics, which comes from the interaction of convex geometry and local theory of Banach spaces. From its beginning, the research interests in this area have been focused in understanding the geometric properties of the unit balls of high-dimensional Banach spaces and their behavior as the dimension grows to infinity. The unit ball of a finite dimensional Banach space is a centrally symmetric convex body and some of these geometric properties include the study of sections and projections of convex bodies, which are also convex bodies. However, when the distribution of mass in a convex body is studied, a convex body $K$ is regarded as a probability space with the uniform probability on $K$ and then the projections of the measure on linear subspaces are not the uniform probability on a convex body anymore and  the class of convex bodies is left. Nevertheless, as a consequence of Brunn-Minkowski's inequality, we remain in the class of log-concave probabilities, which are the probability measures with a log-concave density with respect to the Lebesgue measure. It is natural then, to work in the more general setting of log-concave functions rather than in the setting of convex bodies and a big part of the research in the area has gone in the direction of extending results from convex bodies to log-concave functions (see, for instance, \cite{AKM}, \cite{FM}, \cite{AKSW}, \cite{KM}, \cite{C}, \cite{CF}), while many of the open problems in the field are nowadays stated in terms of log-concave functions rather than in terms of convex bodies.

In \cite{J} John proved that, among all the ellipsoids contained in a convex body $K$, there exists a unique ellipsoid $\mathcal{E}(K)$ with maximum volume. This ellipsoid is called the John's ellipsoid of $K$. Furthermore, he characterized the cases in which the John's ellipsoid of $K$ is the Euclidean ball $B_2^n$. This characterization, together with Brascamp-Lieb inequality \cite{BL}, led to many important results in the theory of convex bodies, showing that, among centrally symmetric convex bodies, the cube is an extremal convex body for many geometric parameters like the Banach-Mazur distance to the Euclidean ball, the volume ratio, the mean width, or the mean width of the polar body, see \cite{B}, \cite{SS}, \cite{Ba}. The non-symmetric version of these problems has also been studied, see for instance \cite{S}, \cite{Le}, \cite{Pa}, \cite{JN}, \cite{Sch1}.

A function $f:\R^n\to\R$ is said to be log-concave if it is of the form $f(x)=e^{-v(x)}$, with $v:\R^n\to (-\infty,+\infty]$ a convex function. Note that log-concave functions are continuous on their support and, since convex functions are differentiable almost everywhere, then so are log-concave functions. In this paper we will extend John's theorem to the context of log-concave functions. We will consider ellipsoidal functions (we will sometimes simply call them ellipsoids), which will be functions of the form
$$\mathcal{E}^a(x)=a\chi_{\mathcal{E}}(x),$$
with $a$ a positive constant and $\chi_\mathcal{E}$ the characteristic function of an ellipsoid $\mathcal{E}$, {\it i.e.}, an affine image of the Euclidean ball ($\mathcal{E}= c+TB_2^n$ with $c\in\R^n$ and $T\in GL(n)$, the set of linear matrices with non-zero determinant). The determinant of a matrix $T$ will be denoted by $|T|$. The volume of a convex body $K$ will also be denoted by $|K|$. The trace of $T$ will be denoted by $\textrm{tr}(T)$.

Given a log-concave function $f:\R^n\to\R$, we will say that an ellipsoid $\mathcal{E}^a$ is contained in $f$ if for every $x\in\R^n$, $\mathcal{E}^a(x)\leq f(x)$. Notice that if $\mathcal{E}^a\leq f$, then necessarily $0<a\leq\Vert f\Vert_\infty$ and that for any $t\in(0,1]$
$$
\mathcal{E}^{t\Vert f\Vert_\infty} \leq f
$$
if and only if the ellipsoid $\mathcal{E}$ is contained in the convex body
$$
K_t(f)=\{x\in\R^n\,:\,f(x)\geq t\Vert f\Vert_\infty\}.
$$
If $f=\chi_K(x)$ is the characteristic function of a convex body $K$, then an ellipsoid $\mathcal{E}$ is contained in $K$ if and only if $\mathcal{E}^t\leq f$ for any $t\in(0,1]$. In Section \ref{ExistenceUniquenessSection} we will show the following:
\begin{thm}\label{ExistenceUniquenessEllipsoid}
Let $f:\R^n\to\R$ be an integrable log-concave function. There exists a unique ellipsoid $\mathcal{E}(f)=\mathcal{E}^{t_0\Vert f\Vert_\infty}$ for some $t_0\in(0,1]$, such that
\begin{itemize}
\item $\mathcal{E}(f)\leq f$
\item $\displaystyle{\int_{\R^n}\mathcal{E}(f)(x)dx=\max\left\{\int_{\R^n}\mathcal{E}^{a}(x)dx\,:\,\mathcal{E}^{a}\leq f\right\}.}$
\end{itemize}
We will call this ellipsoid the John's ellipsoid of $f$.
\end{thm}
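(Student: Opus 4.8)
The plan is to reduce everything to an optimization in the single level parameter $t$, and then invoke John's theorem for convex bodies. For $t\in(0,1]$ put $K_t:=K_t(f)$; since $f$ is integrable and log-concave, each $K_t$ is compact, and it has nonempty interior for all sufficiently small $t$ (as $\supp f$ does). By the remark preceding the statement, an ellipsoid function $\mathcal{E}^a$ with $a=t\Vert f\Vert_\infty$ satisfies $\mathcal{E}^a\le f$ exactly when $\mathcal{E}\subseteq K_t$, and then $\int\mathcal{E}^a=t\Vert f\Vert_\infty|\mathcal{E}|$. Hence, writing $\phi(t):=t\Vert f\Vert_\infty\sup\{|\mathcal{E}|:\mathcal{E}\subseteq K_t\}$ (so $\phi(t)=0$ when $K_t$ has empty interior, and $\phi(t)=t\Vert f\Vert_\infty|\mathcal{E}(K_t)|$ otherwise, $\mathcal{E}(K_t)$ being John's ellipsoid of the body $K_t$), one has $\sup\{\int\mathcal{E}^a:\mathcal{E}^a\le f\}=\sup_{t\in(0,1]}\phi(t)$, and the maximizers $\mathcal{E}^a\le f$ of the integral are precisely the functions $t_0\Vert f\Vert_\infty\chi_{\mathcal{E}(K_{t_0})}$ with $t_0$ a maximizer of $\phi$ (using John's uniqueness for $K_{t_0}$). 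Thus it suffices to show that $\phi$ attains its supremum at exactly one point $t_0\in(0,1]$.

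For existence I would not establish continuity of $\phi$ but instead run a direct compactness argument. Set $M:=\sup\{\int\mathcal{E}^a:\mathcal{E}^a\le f\}$; then $0<M\le\Vert f\Vert_1<\infty$, the upper bound from $a|\mathcal{E}|\le\int_{\mathcal{E}}f$ and the lower bound because $f$ is continuous and strictly positive on a small ball inside $\supp f$. Take $a_k\chi_{\mathcal{E}_k}\le f$ with $a_k|\mathcal{E}_k|\to M$, and write $\mathcal{E}_k=c_k+T_kB_2^n$ with $T_k$ positive definite (polar decomposition). Passing to a subsequence, $a_k\to a_*\in[0,\Vert f\Vert_\infty]$. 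The crucial point is $a_*>0$: since $\mathcal{E}_k\subseteq K_{a_k/\Vert f\Vert_\infty}=\{f\ge a_k\}$, we get $a_k|\mathcal{E}_k|\le a_k|\{f\ge a_k\}|$, and integrability of $f$ forces $s\,|\{f\ge s\}|\to0$ as $s\to0^+$ (write $s|\{f\ge s\}|\le s|\{f\ge s_0\}|+\int_{\{s\le f<s_0\}}f$ and let first $s\to0$, then $s_0\to0$), so $a_*=0$ would give $M=0$, a contradiction. With $a_*>0$, for large $k$ all $\mathcal{E}_k$ lie in the fixed compact convex set $\{f\ge a_*/2\}$, hence $\Vert T_k\Vert$ and $|c_k|$ are bounded; pass to a further subsequence with $T_k\to T_*$ (positive semidefinite) and $c_k\to c_*$. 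Since $a_k|\det T_k||B_2^n|\to M>0$, $T_*$ is invertible, so $\mathcal{E}_*:=c_*+T_*B_2^n$ is an ellipsoid. Finally $a_*\chi_{\mathcal{E}_*}\le f$: if $x$ lies in the interior of $\mathcal{E}_*$ then $|T_k^{-1}(x-c_k)|<1$ for large $k$, whence $f(x)\ge a_k\to a_*$; continuity of $f$ on $\supp f$ extends this to all of $\mathcal{E}_*$. As $a_*|\mathcal{E}_*|=M$, the supremum is attained.

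For uniqueness, let $a_1\chi_{\mathcal{E}_1}$ and $a_2\chi_{\mathcal{E}_2}$ both attain $M$, with $\mathcal{E}_i=c_i+T_iB_2^n$, $T_i$ positive definite. Given $\lambda\in(0,1)$, set $c_\lambda=(1-\lambda)c_1+\lambda c_2$, $T_\lambda=(1-\lambda)T_1+\lambda T_2$, $\mathcal{E}_\lambda=c_\lambda+T_\lambda B_2^n$ and $b=a_1^{1-\lambda}a_2^{\lambda}$. One checks $\mathcal{E}_\lambda\subseteq(1-\lambda)\mathcal{E}_1+\lambda\mathcal{E}_2$, and log-concavity of $f$ gives $f\ge a_1^{1-\lambda}a_2^{\lambda}$ on $(1-\lambda)\mathcal{E}_1+\lambda\mathcal{E}_2$ (since $f\ge a_i$ on $\mathcal{E}_i$), so $b\chi_{\mathcal{E}_\lambda}\le f$. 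Maximality then forces $b|\mathcal{E}_\lambda|\le M$, i.e. $|\det T_\lambda|\le|\det T_1|^{1-\lambda}|\det T_2|^{\lambda}$. But Minkowski's determinant inequality ($\det(A+B)^{1/n}\ge\det A^{1/n}+\det B^{1/n}$ for positive definite $A,B$) together with the arithmetic--geometric mean inequality yields the opposite inequality; so equality holds in both, and the equality cases force $T_1=T_2$, and then $a_1=a_2$. Writing $a:=a_1=a_2$ and $t_0:=a/\Vert f\Vert_\infty\in(0,1]$, both $\mathcal{E}_1$ and $\mathcal{E}_2$ are ellipsoids of maximal volume contained in the convex body $K_{t_0}$, so they coincide by the uniqueness in John's theorem; this also determines $t_0$ uniquely.

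The step I expect to be the crux is the one in the existence argument preventing a maximizing sequence from running off to the constant value $0$ on an ellipsoid of diverging volume; this is precisely where integrability of $f$ enters, through the elementary estimate $s|\{f\ge s\}|\to0$ as $s\to0^+$. (An alternative route to existence --- proving $\phi$ upper semicontinuous on $(0,1]$ and vanishing at $0^+$ --- would instead run into the behavior of $t\mapsto\mathcal{E}(K_t)$ near levels where $K_t$ degenerates, for instance near $t=1$ when $\{f=\Vert f\Vert_\infty\}$ has empty interior.)
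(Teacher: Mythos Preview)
Your argument is correct. The paper takes a different route to existence: it shows that $\phi_f(t)=t\Vert f\Vert_\infty|\mathcal{E}(K_t)|$ is continuous on $(0,1]$ and tends to $0$ as $t\to0^+$, hence attains its maximum. Your compactness argument on maximizing sequences bypasses continuity entirely; the key input is the same (integrability forces $s\,|\{f\ge s\}|\to0$, which the paper also uses to get $\phi_f(0^+)=0$), but you avoid the analysis of how $t\mapsto K_t$ varies, including the degenerate endpoint $t=1$ you flag. For uniqueness the two arguments are close in spirit: the paper first proves a log-concavity inequality $\phi_f(t_0^{1-\lambda}t_1^\lambda)\ge\phi_f(t_0)^{1-\lambda}\phi_f(t_1)^\lambda$ via the same chain (log-concavity of $f$, Minkowski's determinant inequality, AM--GM) and then reads off uniqueness of $t_0$ from the equality cases, whereas you run that chain directly on two competing maximizers, deduce $T_1=T_2$ and $a_1=a_2$, and finish with classical John uniqueness in $K_{t_0}$ to pin down the centers. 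The paper's route has the advantage of isolating the multiplicative concavity of $\phi_f$ as a standalone lemma reused later; your route is more self-contained and slightly shorter. One small cosmetic point: to pass from $\textrm{int}\,\mathcal{E}_*\subseteq\{f\ge a_*\}$ to $\mathcal{E}_*\subseteq\{f\ge a_*\}$ you really only need that $\{f\ge a_*\}$ is closed (upper semicontinuity of $f$), not full continuity on the support.
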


The existence and uniqueness of the John's ellipsoid of an integrable log-concave function $f$ will allow us to define the integral ratio of $f$:
\begin{df}
Let $f:\R^n\to\R$ be an integrable log-concave function and $\mathcal{E}(f)$ its John's ellipsoid. We define the integral ratio of $f$:
$$
I. rat(f)=\left(\frac{\int_{\R^n}f(x)dx}{\int_{\R^n}\mathcal{E}(f)(x)dx}\right)^\frac{1}{n}.
$$
\end{df}
\begin{rmk}
This quantity is affine invariant, i.e., $I. rat(f\circ T)=I. rat(f)$ for any affine map $T$. When $f=\chi_K$ is the characteristic function of a convex body then $I. rat(f)=v.rat(K)$, the volume ratio of $K$ (Recall that $v.rat(K)=\left(\frac{|K|}{|\mathcal{E}(K)|}\right)^\frac{1}{n}$, where $\mathcal{E}(K)$ is the John's ellipsoid of $K$).
\end{rmk}
In Section \ref{CharacterizationJohnsPosition} we will give an upper bound for the integral ratio of log-concave functions, finding the functions that maximize it. Namely, denoting by $\Delta_n$ and $B_\infty^n$ the regular simplex centered at the origin and the unit cube in $\R^n$, and by $\Vert\cdot\Vert_K$ the gauge function associated to a convex body $K$ containing the origin, which is defined as
$$
\Vert x \Vert_K\inf\{\lambda>0\,:\,x\in\lambda K\},
$$
we will prove the following
\begin{thm}\label{TheoremMaximizer}
Let $f:\R^n\to\R$ be an integrable log-concave function. Then,
$$
I.rat(f)\leq I.rat(g_c),
$$
where
$g_c(x)=e^{-\Vert x\Vert_{\Delta_n-c}}$ for any $c\in\Delta^n$. Furthermore, there is equality if and only if $\frac{f}{\Vert f\Vert_\infty}=g_c\circ T$ for some affine map $T$ and some $c\in\Delta^n$. If we assume $f$ to be even, then
$$
I.rat(f)\leq I.rat(g),
$$
where
$g(x)=e^{-\Vert x\Vert_{B_\infty^n}}$. with equality if and only if $\frac{f}{\Vert f\Vert_\infty}=g\circ T$ for some linear map $T\in GL(n)$.
\end{thm}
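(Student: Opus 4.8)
The plan is to work directly with the superlevel sets of $f$ and collapse everything to a one–variable estimate, using only Theorem~\ref{ExistenceUniquenessEllipsoid}, the classical reverse isoperimetric inequality of Ball (the simplex, resp.\ the cube, maximizes the volume ratio) and the Brunn--Minkowski inequality. (One could instead mimic Ball's original contact–point argument, using a functional John characterization to produce a piecewise–linear majorant of $f$ together with the Brascamp--Lieb inequality, but the route below avoids the contact–point analysis.) I expect the last reduction to be the delicate point.

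First I would use the affine invariance and $1$–homogeneity of $I.rat$ to assume $\|f\|_\infty=1$ and that $f$ is in John's position, $\mathcal{E}(f)=t_0\chi_{B_2^n}$; writing $f=e^{-v}$ with $v$ convex, $\min v=0$, and $h:=\log(1/t_0)$, this gives $\int\mathcal{E}(f)=e^{-h}|B_2^n|$, hence $I.rat(f)^n=e^{h}|B_2^n|^{-1}\int_{\R^n}f$. Two facts about this position should be recorded. For every $t\in(0,1]$ the ellipsoid $\mathcal{E}(K_t(f))$ lies in $K_t(f)$, so $\mathcal{E}^{\,t}\le f$ with $\mathcal{E}=\mathcal{E}(K_t(f))$, and maximality in Theorem~\ref{ExistenceUniquenessEllipsoid} gives $t\,|\mathcal{E}(K_t(f))|\le\int\mathcal{E}(f)=e^{-h}|B_2^n|$; with $t=e^{-u}$ and $K_t(f)=\{v\le u\}$ this reads $|\mathcal{E}(\{v\le u\})|\le e^{u-h}|B_2^n|$. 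Secondly, $\{v\le u\}$ is a convex body, so by Ball's inequality its volume ratio is at most $v.rat(\Delta_n)$ (in the even case $v$ is even, $\{v\le u\}$ is origin–symmetric, and one uses the cube), whence $|\{v\le u\}|\le v.rat(\Delta_n)^n|\mathcal{E}(\{v\le u\})|\le v.rat(\Delta_n)^n e^{u-h}|B_2^n|$. Finally, by Brunn--Minkowski applied to $\{v\le\frac{u_1+u_2}2\}\supseteq\frac12\{v\le u_1\}+\frac12\{v\le u_2\}$, the function $G(u):=|\{v\le u\}|^{1/n}$ is concave and nondecreasing on $[0,\infty)$, and the previous line says $G(u)\le\bar G(u):=C\,e^{(u-h)/n}$ with $C:=v.rat(\Delta_n)\,|B_2^n|^{1/n}$.

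Now $\int_{\R^n}f=\int_0^\infty|\{v\le u\}|e^{-u}\,du=\int_0^\infty G(u)^n e^{-u}\,du$, and the heart of the matter is to bound this using only that $G$ is concave, nonnegative, and dominated by the \emph{convex} function $\bar G$. The key point is that a concave function below a convex one can be separated from it by an affine function: there is an affine $A$ with $G\le A\le\bar G$ on $[0,\infty)$ (separate the hypograph of $G$ from the epigraph of $\bar G$), and then $A\ge G\ge0$ there. Thus $\int_0^\infty G^ne^{-u}\le\int_0^\infty A^ne^{-u}$, and it remains to maximize $\int_0^\infty A(u)^n e^{-u}\,du$ over affine $A(u)=\alpha+\beta u$ with $\alpha,\beta\ge0$ and $\alpha+\beta u\le Ce^{(u-h)/n}$ for all $u\ge0$. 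An elementary computation shows each such line lies below the tangent to $\bar G$ of the same slope, that nonnegativity forces the slope to be at most $\frac Cn e^{(n-h)/n}$, and that the maximum $\int_0^\infty A^ne^{-u}=\frac{n!}{n^n}C^ne^{\,n-h}$ is attained only by the tangent to $\bar G$ at $u=n$, i.e.\ $A_0(u)=\frac Cn e^{(n-h)/n}u$. Hence $\int_{\R^n}f\le\frac{n!}{n^n}C^n e^{\,n-h}$ and
$$
I.rat(f)^n=e^{h}|B_2^n|^{-1}\!\!\int_{\R^n}\!\!f\ \le\ \frac{n!\,e^{n}}{n^{n}}\,\frac{C^{n}}{|B_2^n|}=\frac{n!\,e^{n}}{n^{n}}\,v.rat(\Delta_n)^n=I.rat(g_c)^n,
$$
the last equality being the direct computation $\int g_c=n!\,|\Delta_n|$, $\int\mathcal{E}(g_c)=e^{-n}n^{n}|\mathcal{E}(\Delta_n)|$ (and similarly with $B_\infty^n$ in the even case).

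For equality I would track the inequalities backwards: equality forces $G=A_0$ on $[0,\infty)$, so $\{v=0\}$ reduces to a point $x_0$ and $|\{v\le u\}|$ is a fixed multiple of $u^n$; combined with equality in Ball's inequality (so $\{v\le u\}$ is a regular simplex in John's position for a.e.\ $u$) this gives $\{v\le u\}=x_0+uS$ for a fixed simplex $S$ with $0\in\mathrm{int}\,S$, i.e.\ $v(x)=\|x-x_0\|_S$, equivalently $f/\|f\|_\infty=g_c\circ T$ for an affine $T$ and some $c\in\Delta_n$ (in the even case $v(x)=\|x\|_S$ with $S$ a linear image of the cube, so $f/\|f\|_\infty=g\circ T$ with $T\in GL(n)$). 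The main obstacle is the third paragraph: the pointwise bound $G\le\bar G$ alone is useless, since $\int_0^\infty\bar G^ne^{-u}=\infty$, and it is precisely the \emph{concavity} of $G$ supplied by Brunn--Minkowski that, together with the sandwiching line, is just strong enough to recover the sharp constant.
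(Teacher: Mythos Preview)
Your argument for the inequality is correct and takes a genuinely different route from the paper. The paper first develops a full characterization of the functional John position (a contact--point theorem in $\R\times L$, Theorems~\ref{FunctionalJohn} and~\ref{FunctionalJohnNonSymmetric}) and extracts from it the sharp level--wise bound
\[
|\mathcal{E}_t(f)|\le\Bigl(1-\tfrac1n\log\tfrac{t}{t_0}\Bigr)^n|\mathcal{E}_{t_0}(f)|
\]
(Corollary~\ref{EqualityInEllipsoidsEquation}); integrating against $t$ then yields the $t_0$--dependent estimate $I.rat(f)\le I.rat(f_{\Delta^n,t_0})$, and the monotonicity of the right--hand side in $t_0$ (Lemma~\ref{maximizers}) finishes. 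You bypass the contact--point machinery entirely: from the bare maximality in Theorem~\ref{ExistenceUniquenessEllipsoid} you only get the weaker bound $|\mathcal{E}_t(f)|\le\frac{t_0}{t}|\mathcal{E}_{t_0}(f)|$, which after Ball gives $G\le\bar G$ with $\bar G$ exponential; but you then bring in Brunn--Minkowski to make $G$ concave and separate it from the convex $\bar G$ by an affine line, reducing everything to a one--variable optimization. This is more elementary and self--contained, at the price of not producing the intermediate $t_0$--dependent inequality that the paper states as a separate theorem.

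One point in your equality discussion deserves tightening. You write ``combined with equality in Ball's inequality (so $\{v\le u\}$ is a regular simplex \dots\ for a.e.\ $u$)'', but $G=A_0$ does \emph{not} give $G=\bar G$ (equality in the combined Ball--plus--maximality bound) for a.e.\ $u$: indeed $A_0<\bar G$ except at $u=n$. The clean way is: $G=A_0$ is linear, so the Brunn--Minkowski equality case forces all level sets $\{v\le u\}$ to be homothetic and $\{v=0\}$ to be a point, giving $v(x)=\|x-x_0\|_{S}$; now observe that $A_0(n)=\bar G(n)$, so at $u=n$ both constituent inequalities are equalities, which says (i) $\phi_f(e^{-n})=\phi_f(t_0)$, hence $h=n$ by uniqueness of the maximum, and (ii) $v.rat(K_{e^{-n}})=v.rat(\Delta_n)$, so $K_{e^{-n}}$ (and by homothety every $K_t$) is a simplex. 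With this adjustment the equality characterization goes through.
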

In order to do so we will prove a characterization of the situation in which the John's ellipsoid of an integrable log-concave function is $\mathcal{E}(f)=(B_2^n)^{t_0\Vert f\Vert_\infty}$. In such case we will say that a log-concave function is in John's position.

The isoperimetric inequality states that for any convex body $K$ the quantity $\frac{|\partial K|}{|K|^{\frac{n-1}{n}}}$ is minimized when $K$ is a Euclidean ball. This inequality cannot be reversed in general. However, in \cite{B}, it was shown that for any symmetric convex body $K$, there exists an affine image $TK$ such that the quotient $\frac{|\partial TK|}{|TK|^{\frac{n-1}{n}}}$ is bounded above by the corresponding quantity for the cube $B_\infty^n$. If we do not impose symmetry then the regular simplex is the maximizer. This linear image is the one such that $TK$ is in John's position, {\it i.e.}, the maximum volume ellipsoid contained in $K$ is the Euclidean ball. The quantity studied in the isoperimetric inequality is not affine invariant but in \cite{P}, a stronger affine version of the isoperimetric inequality was established. Namely, it was shown that for any convex body $K$
$$
|K|^\frac{n-1}{n}|\Pi^*(K)|^\frac{1}{n}\leq |B_2^n|^\frac{n-1}{n}|\Pi^*(B_2^n)|^\frac{1}{n},
$$
where $\Pi^*(K)$, which is called the polar projection body of $K$, is the unit ball of the norm $\Vert x\Vert_{\Pi^*(K)}=|x||P_{x^\perp}K|$, being $P_{x^\perp}K$ the projection of $K$ onto the hyperplane orthogonal to $x$. This inequality is known as Petty's projection inequality and there is equality in it if and only if $K$ is an ellipsoid. Furthermore, following the idea in the proof of the reverse isoperimetric inequality, a stability version of it was given in \cite{A}, showing that for any convex body $K$
\begin{equation}\label{ReversePetty}
|K|^\frac{n-1}{n}|\Pi^*(K)|^\frac{1}{n}\geq\frac{1}{v.rat(K)}|B_2^n|^\frac{n-1}{n}|\Pi^*(B_2^n)|^\frac{1}{n}.
\end{equation}

The isoperimetric inequality and Petty's projection inequality have their functional extensions. Namely, Sobolev's inequality, which states that for any function $f$ in the Sobolev space
$$
W^{1,1}(\R^n)=\left\{f\in L^1(\R^n)\,:\,\frac{\partial f}{\partial x_i}\in L^1(\R^n)\quad \forall i\right\}
$$
we have
$$
\Vert|\nabla f|\Vert_1\geq n|B_2^n|^\frac{1}{n}\Vert f\Vert_{\frac{n}{n-1}},
$$
and the affine Sobolev's inequality, proved in \cite{Z}, which states that
\begin{equation}\label{AffineSobolev}
\Vert f\Vert_{\frac{n}{n-1}}|\Pi^*(f)|^\frac{1}{n}\leq\frac{|B_2^n|}{2|B_2^{n-1}|},
\end{equation}
where $\Pi^*(f)$ is the unit ball of the norm
$$
\Vert x\Vert_{\Pi^*(f)}=\int_{\R^n}|\langle\nabla f(y),x\rangle| dy.
$$
We would like to recall here the fact that $W^{1,1}(\R^n)$ is the closure of $\mathcal{C}_{00}^1$, the space of $\mathcal{C}^1$ functions with compact support, \cite{M}.
These inequalities are actually equivalent to their geometric counterparts.

In Section \ref{ReverseSobolev} we will follow the same ideas to obtain functional versions of the reverse isoperimetric inequality and a stability version of the affine Sobolev inequality. We will prove the following extension of \eqref{ReversePetty}, which is a reverse form of \eqref{AffineSobolev} in the class of log-concave functions.
\begin{thm}\label{FunctionalReversePetty}
Let $f\in W^{1,1}(\R^n)$ be a log-concave function. Then
$$
\frac{\Vert f\Vert_{\frac{n}{n-1}}|\Pi^*(f)|^\frac{1}{n}}{\left(\frac{|B_2^n|}{2|B_2^{n-1}|}\right)}\geq\frac{1}{e^{\frac{\int_{\R^n}f(x)\log\left(\frac{f(x)}{\Vert f\Vert_\infty}\right)^\frac{1}{n}dx}{\int_{\R^n}f(x)dx}}\Vert f\Vert_\infty^\frac{1}{n}\left(\frac{\int_{\R^n}f(x)dx}{\int_{\R^n}f^\frac{n}{n-1}(x)dx}\right)^\frac{n-1}{n}I.rat(f)}.
$$
\end{thm}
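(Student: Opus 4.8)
The plan is to transplant to the functional setting the short argument behind the reverse Petty inequality \eqref{ReversePetty} for convex bodies: a ``soft'' lower estimate for $|\Pi^*(f)|$ in terms of $\Vert|\nabla f|\Vert_1$, followed by a functional reverse isoperimetric inequality that controls $\Vert|\nabla f|\Vert_1$ from above once $f$ sits in John's position. First I would put $f$ in John's position: both sides of the asserted inequality are unchanged when $f$ is replaced by $\lambda\,(f\circ T)$ with $\lambda>0$ and $T$ affine. For the left-hand side this is the affine invariance underlying \eqref{AffineSobolev} (write $\nabla(f\circ T)(x)=T^{t}\nabla f(Tx)$ and change variables in $\Vert f\Vert_{n/(n-1)}$ and in $|\Pi^*(f)|$, the Jacobians cancelling); for the right-hand side, $I.rat(f)$, the quotient $\int_{\R^n}f/\int_{\R^n}f^{n/(n-1)}$ and $\int_{\R^n}f\log(f/\Vert f\Vert_\infty)\,dx\big/\int_{\R^n}f$ are affine invariant, $\Vert f\Vert_\infty$ is $T$-invariant, and the powers of $\lambda$ cancel. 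So I may assume $\mathcal{E}(f)=(B_2^n)^{t_0\Vert f\Vert_\infty}$.

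For the soft estimate I would start from the radial representation $|\Pi^*(f)|=\frac1n\int_{S^{n-1}}\Vert u\Vert_{\Pi^*(f)}^{-n}\,du$, apply Jensen's inequality to the convex function $s\mapsto s^{-n}$, and then evaluate $\int_{S^{n-1}}\Vert u\Vert_{\Pi^*(f)}\,du=\int_{\R^n}\big(\int_{S^{n-1}}|\langle\nabla f(y),u\rangle|\,du\big)dy=2|B_2^{n-1}|\,\Vert|\nabla f|\Vert_1$ by Fubini and Cauchy's formula $\int_{S^{n-1}}|\langle v,u\rangle|\,du=2|B_2^{n-1}|\,|v|$. This gives
\[
|\Pi^*(f)|^{\frac1n}\ \geq\ \frac{|S^{n-1}|^{1+\frac1n}}{n^{\frac1n}\int_{S^{n-1}}\Vert u\Vert_{\Pi^*(f)}\,du}\ =\ \frac{n\,|B_2^n|^{1+\frac1n}}{2\,|B_2^{n-1}|\,\Vert|\nabla f|\Vert_1},
\]
i.e.\ $\Vert|\nabla f|\Vert_1\,|\Pi^*(f)|^{1/n}\geq n|B_2^n|^{1+1/n}/(2|B_2^{n-1}|)$; this is the functional counterpart of the classical lower bound $|\partial K|\,|\Pi^*(K)|^{1/n}\geq|\partial B_2^n|\,|\Pi^*(B_2^n)|^{1/n}$, and it uses neither log-concavity nor the position of $f$.

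To finish I would multiply this by $\Vert f\Vert_{n/(n-1)}$ and insert the functional reverse isoperimetric inequality obtained earlier in this section, which for $f$ in John's position reads
\[
\Vert|\nabla f|\Vert_1\ \leq\ n\,|B_2^n|^{\frac1n}\Big(\int_{\R^n}f\Big)^{\frac{n-1}{n}}\Vert f\Vert_\infty^{\frac1n}\,e^{\frac{\int_{\R^n}f\log(f/\Vert f\Vert_\infty)\,dx}{n\int_{\R^n}f}}\,I.rat(f).
\]
Then $n|B_2^n|^{1/n}$ cancels, $\big(\int_{\R^n}f\big)^{(n-1)/n}$ combines with $\Vert f\Vert_{n/(n-1)}=\big(\int_{\R^n}f^{n/(n-1)}\big)^{(n-1)/n}$ to produce $\big(\int_{\R^n}f/\int_{\R^n}f^{n/(n-1)}\big)^{-(n-1)/n}$, and after dividing by $|B_2^n|/(2|B_2^{n-1}|)$ one is left with exactly the asserted inequality. (As a check, the chain is sharp, for instance for $f=\chi_{B_2^n}$ and for affine images of $f(x)=e^{-|x|}$.)

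The one substantial ingredient is the functional reverse isoperimetric inequality used in the last step, and this is where John's position really enters. As in Ball's proof of the reverse isoperimetric inequality for convex bodies — which relies on John's theorem together with the Brascamp-Lieb inequality — I would prove it by combining the characterization of functional John's position from Section \ref{CharacterizationJohnsPosition} with a (reverse) Brascamp-Lieb inequality; that is the hardest part of the development. Granting it, the only points needing care here are the affine normalization, the exact normalizing constants in the elementary Jensen-plus-Cauchy step, and the bookkeeping with $\Vert f\Vert_\infty$, $\int_{\R^n}f$ and $\int_{\R^n}f^{n/(n-1)}$ in the final simplification.
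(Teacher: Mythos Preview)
Your overall architecture is exactly the paper's: reduce to John's position by affine invariance, lower–bound $|\Pi^*(f)|^{1/n}$ via Jensen and Cauchy's formula, then feed in an upper bound for $\Vert|\nabla f|\Vert_1$ in John's position, and do the bookkeeping. The Jensen step and the final algebra are correct and match the paper line by line (your displayed inequality $\Vert|\nabla f|\Vert_1\,|\Pi^*(f)|^{1/n}\geq n|B_2^n|^{1+1/n}/(2|B_2^{n-1}|)$ is precisely what the paper obtains).

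The one substantive divergence is in the key ingredient, your ``functional reverse isoperimetric inequality''. It is \emph{not} obtained earlier in the section as a separate statement, and it is \emph{not} proved via the John decomposition of Section~\ref{CharacterizationJohnsPosition} together with Brascamp--Lieb. The paper's argument is far more elementary and uses only the inclusion $\mathcal{E}(f)\leq f$, not the contact decomposition. With $g=\mathcal{E}(f)=(B_2^n)^{t_0\Vert f\Vert_\infty}\leq f$, one has $f\star g_\varepsilon\leq f\star f_\varepsilon$ for the Asplund product; differentiating at $\varepsilon=0$ via Lemmas~\ref{AsplundProductEuclideanBalls} and~\ref{AsplundProductSameFunction} gives
\[
\int_{\R^n}|\nabla f|\,dx+\log(t_0\Vert f\Vert_\infty)\int_{\R^n}f\,dx\ \leq\ n\int_{\R^n}f\,dx+\int_{\R^n}f\log f\,dx,
\]
i.e.\ $\Vert|\nabla f|\Vert_1\leq n\int f+\int f\log\big(f/(t_0\Vert f\Vert_\infty)\big)$. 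The paper then keeps $t_0$ explicit, combines with the Jensen bound, and only at the very end replaces the expression $e^{-s_0}[(1+s_0)A+B]$ (where $t_0=e^{-ns_0}$) by its maximum $Ae^{B/A}$ over $s_0\geq0$. Your version simply absorbs this last step---equivalently, the inequality $1+x\leq e^x$---into the intermediate lemma, which is why your displayed bound on $\Vert|\nabla f|\Vert_1$ is already $t_0$--free.

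So your proposal is correct in outline, and your stated intermediate inequality is true, but the route you sketch for its proof (Brascamp--Lieb plus the characterization of Theorem~\ref{FunctionalJohn}/\ref{FunctionalJohnNonSymmetric}) is not the paper's and is left completely unspecified. The actual mechanism is the Asplund--product comparison above, the functional analogue of the one--line convex--body argument $K+\varepsilon B_2^n\subseteq(1+\varepsilon)K$ when $B_2^n\subseteq K$; Section~\ref{CharacterizationJohnsPosition} is not invoked at all in the proof of Theorem~\ref{FunctionalReversePetty}.
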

\begin{rmk}
By (\ref{AffineSobolev}) the left-hand side term is bounded above by 1. This lower bound is affine invariant, and if $f=\chi_K$ is the characteristic function of a convex body, then we recover inequality \eqref{ReversePetty}.
\end{rmk}
\begin{rmk}
Let us note that if $\int_{\R^n} f(x)dx=1$ the previous inequality turns into

$$e^{\frac{-1}{n}\int f(x)\log f(x)}dx\leq\frac{I.rat(f)|\Pi^*(f)|^\frac{1}{n}}{\left(\frac{|B_2^n|}{2|B_2^{n-1}|}\right)},$$
which along with the affine Sobolev inequality (\ref{AffineSobolev}) provides us with a bound for the power entropy of $f$ of the following form

$$H(f):=e^{\frac{-2}{n}\int_{\R^n} f(x)\log f(x)dx}\leq \left(\frac{I.rat(f)}{\|f\|_{\frac{n}{n-1}}}\right)^2.$$

For other recently studied connections between Information theory and convex geometry we refer to  \cite{BM1}, \cite{BM2} and references therein.
\end{rmk}
Let us introduce some more notation: If $K$ is a convex body, $r(K)$ will denote its inner radius, {\it i.e.}, the radius of the largest centered Euclidean ball contained in it. For a set $A\subseteq\R^n$, the positive hull of $A$ is the convex cone
$$
\textrm{pos}A=\left\{\sum_{i=1}^n\lambda_ix_i\,:\,\lambda_i>0, x_i\in A, n\in\N\right\}.
$$
Given a convex set $E\subseteq\R^n$ and $x\in\partial E$, the boundary of $E$, the normal cone of $E$ at $x$ is defined as
$$
N(E,x)=\{u\in\R^n\,:\,\langle z-x,u\rangle\leq 0,\, \forall z\in E\}.
$$
The support cone of $E$ at $x$ is the cone
$$
S(E,x)=\textrm{cl}\bigcup_{\lambda>0}\lambda(E-x).
$$
The following polarity relation holds:
$$
N(E,x)^*=S(E,x),
$$
where the polarity relation is the polarity of convex cones
$$
C^*=\{y\in\R^n\,:\,\langle y,x\rangle\leq 0,\,\forall x\in C\}.
$$
If $H$ is an affine subspace through $x$, then the normal cone to $E\cap H$ at $x$, relative to the subspace $H$ is
$$
N_H(E\cap H,x)=\{u\in H_0\,:\,\langle z-x,u\rangle\leq 0,\, \forall z\in E\cap H\},
$$
where $H_0$ is the linear subspace parallel to $H$. The similar duality holds
$$
N_H(E\cap H,x)^{*H_0}= S(E\cap H,x),.
$$
where the duality is taken with respect to the linear subspace $H_0$. It happens  that
\begin{equation}\label{DualityConesSubspaces}
N_H(E\cap H,x)=P_{H_0}N(E,x),
\end{equation}
where $P_{H_0}$ denotes the orthogonal projection onto the linear subspace $H_0$. We refer the reader to \cite{Sch} for these and other known facts on convex cones.

For any function $f:\R^n\to\R$ and any $\varepsilon>0$, we will denote $f_\varepsilon$ the function given by
$$
f_\varepsilon(x)= f\left(\frac{x}{\varepsilon}\right)^\varepsilon.
$$
If $f$ and $g$ are two log-concave functions, then their Asplund product is the log-concave function
$$
f\star g(z)=\max_{z=x+y}f(x)g(y)=\max_{y\in\R^n}f(z-y)g(y).
$$
\section{John's ellipsoid of a log-concave function}\label{ExistenceUniquenessSection}

In this section we show the existence and uniqueness of the John's ellipsoid of an integrable log-concave function and show that the integral ratio of a function is an affine invariant.

For any ellipsoid $\mathcal{E}^a$, its integral is $a|\mathcal{E}|$. Since for any $t\in(0,1]$ the convex body $K_t(f)$ has a unique maximum volume ellipsoid $\mathcal{E}_t(f)=\mathcal{E}(K_t(f))$, then
$$
\max\left\{\int_{\R^n}\mathcal{E}^{a}(x)dx\,:\,\mathcal{E}^{a}\leq f\right\}=\max_{t\in(0,1]}\phi_f(t),
$$
where $$\phi_f(t)=t\Vert f\Vert_\infty|\mathcal{E}_t(f)|.$$
Thus, in order to prove Theorem \ref{ExistenceUniquenessEllipsoid} we need to prove that the function $\phi_f(t)$ attains a unique maximum in the interval $(0,1]$ at some point $t_0$. Then the ellipsoid $\mathcal{E}(f)$ will be the function
$$
\mathcal{E}(f)(x)=t_0\Vert f\Vert_\infty\chi_{\mathcal{E}_{t_0}(f)}(x)=\left(\mathcal{E}_{t_0}(f)\right)^{t_0\Vert f\Vert_\infty}(x),
$$
where $\mathcal{E}_{t_0}(f)$ is the John's ellipsoid of the convex body $K_{t_0}(f)$. If $f=\chi_K$ with $K$ a convex body then the John's ellipsoid of $f$ will be the characteristic function of the John's ellipsoid of $K$ $\mathcal{E}(K)^1=\chi_{\mathcal{E}(K)}$. We will prove that $\phi_f$ attains a unique maximum in the interval $(0,1]$. First we prove the following:

\begin{lemma}\label{concavityPhi}
Let $f$ be a log-concave function and let $\phi_f:(0,1]\to\R$ defined as before. For any $t_0,t_1\in(0,1]$ and any $\lambda\in[0,1]$
$$
\phi_f(t_0^{1-\lambda}t_1^\lambda)\geq\phi_f(t_0)^{1-\lambda}\phi_f(t_1)^\lambda.
$$
\end{lemma}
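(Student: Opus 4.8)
The plan is to work at the level of the super-level sets, where log-concavity of $f$ becomes a Minkowski-type inclusion, and then to transfer this inclusion to the John's ellipsoids using the behaviour of ellipsoids under convex combinations. Write $f=e^{-v}$ with $v$ convex, put $m=\min v$ so that $\Vert f\Vert_\infty=e^{-m}$, and observe that for $t\in(0,1]$ one has $K_t(f)=\{x\in\R^n:v(x)\le m-\log t\}$. Hence, if $x_i\in K_{t_i}(f)$ for $i=0,1$ and $x_\lambda=(1-\lambda)x_0+\lambda x_1$, convexity of $v$ gives $v(x_\lambda)\le(1-\lambda)(m-\log t_0)+\lambda(m-\log t_1)=m-\log(t_0^{1-\lambda}t_1^\lambda)$, that is
$$
(1-\lambda)K_{t_0}(f)+\lambda K_{t_1}(f)\subseteq K_{t_0^{1-\lambda}t_1^\lambda}(f).
$$

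Next I would pass to the John's ellipsoids. Write $\mathcal{E}_{t_i}(f)=c_i+T_iB_2^n$, where we may assume each $T_i$ is symmetric and positive definite (replacing $T_i$ by $T_iO_i$ with $O_i$ orthogonal changes neither $\mathcal{E}_{t_i}(f)$ nor $|T_i|$). Since $\mathcal{E}_{t_i}(f)\subseteq K_{t_i}(f)$, the inclusion above yields $(1-\lambda)\mathcal{E}_{t_0}(f)+\lambda\mathcal{E}_{t_1}(f)\subseteq K_{t_0^{1-\lambda}t_1^\lambda}(f)$. The left-hand side is not an ellipsoid, but it contains one: for every $u\in B_2^n$ we have $\big((1-\lambda)T_0+\lambda T_1\big)u=(1-\lambda)T_0u+\lambda T_1u\in(1-\lambda)T_0B_2^n+\lambda T_1B_2^n$, so
$$
\mathcal{E}_\lambda:=\big((1-\lambda)c_0+\lambda c_1\big)+\big((1-\lambda)T_0+\lambda T_1\big)B_2^n\subseteq(1-\lambda)\mathcal{E}_{t_0}(f)+\lambda\mathcal{E}_{t_1}(f)\subseteq K_{t_0^{1-\lambda}t_1^\lambda}(f),
$$
and $(1-\lambda)T_0+\lambda T_1$ is positive definite, hence $\mathcal{E}_\lambda$ is a genuine ellipsoid contained in $K_{t_0^{1-\lambda}t_1^\lambda}(f)$.

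By the maximality property defining $\mathcal{E}_{t_0^{1-\lambda}t_1^\lambda}(f)$, we get $|\mathcal{E}_{t_0^{1-\lambda}t_1^\lambda}(f)|\ge|\mathcal{E}_\lambda|=|(1-\lambda)T_0+\lambda T_1|\,|B_2^n|$. Now the Minkowski determinant inequality—equivalently, concavity of $|\cdot|^{1/n}$ on positive definite matrices together with the arithmetic–geometric mean inequality—gives $|(1-\lambda)T_0+\lambda T_1|\ge|T_0|^{1-\lambda}|T_1|^\lambda$, and since $|\mathcal{E}_{t_i}(f)|=|T_i|\,|B_2^n|$ this yields
$$
|\mathcal{E}_{t_0^{1-\lambda}t_1^\lambda}(f)|\ge|\mathcal{E}_{t_0}(f)|^{1-\lambda}|\mathcal{E}_{t_1}(f)|^\lambda.
$$
Multiplying both sides by $t_0^{1-\lambda}t_1^\lambda\Vert f\Vert_\infty=(t_0\Vert f\Vert_\infty)^{1-\lambda}(t_1\Vert f\Vert_\infty)^\lambda$ and recalling $\phi_f(t)=t\Vert f\Vert_\infty|\mathcal{E}_t(f)|$ gives exactly $\phi_f(t_0^{1-\lambda}t_1^\lambda)\ge\phi_f(t_0)^{1-\lambda}\phi_f(t_1)^\lambda$. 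The two points needing care are the reduction to symmetric positive definite $T_i$ (so that the combination $(1-\lambda)T_0+\lambda T_1$ remains invertible and defines an ellipsoid) and the invocation of the Minkowski determinant inequality; everything else is the elementary chain of inclusions. One should also note the degenerate cases (e.g.\ $\Vert f\Vert_\infty$ not attained, so that $K_1(f)$ may have empty interior), where the corresponding value of $\phi_f$ is $0$ and the inequality holds trivially.
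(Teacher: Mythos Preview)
Your proof is correct and follows essentially the same route as the paper: the log-concavity inclusion $(1-\lambda)K_{t_0}+\lambda K_{t_1}\subseteq K_{t_0^{1-\lambda}t_1^\lambda}$, the passage to the ellipsoid $((1-\lambda)c_0+\lambda c_1)+((1-\lambda)T_0+\lambda T_1)B_2^n$ inside the Minkowski sum, Minkowski's determinant inequality combined with the arithmetic--geometric mean, and the final multiplication by $t_0^{1-\lambda}t_1^\lambda\Vert f\Vert_\infty$. Your added remarks on the polar decomposition ensuring the $T_i$ can be taken symmetric positive definite and on the degenerate case $|\mathcal{E}_1(f)|=0$ are welcome clarifications not spelled out in the paper.
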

\begin{proof}
Since $f$ is log-concave
\begin{eqnarray*}
\{x\in\R^n\,:\,f(x)\geq t_0^{1-\lambda}t_1^\lambda\Vert f\Vert_\infty\}
&\supseteq&
(1-\lambda)\{x\in\R^n\,:\,f(x)\geq t_0\Vert f\Vert_\infty\}
\cr
&+&
\lambda\{x\in\R^n\,:\,f(x)\geq t_1\Vert f\Vert_\infty\}.
\end{eqnarray*}
Thus, if $\mathcal{E}_{t_i}(f)=a_i+T_iB_2^n$ with $T_i$ a symmetric positive definite matrix, $i=0,1$, then
\begin{eqnarray}
\label{eq:Econvex}
\{x\in\R^n\,:\,f(x)\geq t_0^{1-\lambda}t_1^\lambda\Vert f\Vert_\infty\}&\supseteq&(1-\lambda)\mathcal{E}_{t_0}(f)+\lambda\mathcal{E}_{t_1}(f)\cr
&=&(1-\lambda)a_0+\lambda a_1+(1-\lambda)T_{t_0}B_2^n+\lambda T_{t_1}B_2^n\cr
&\supseteq&(1-\lambda)a_0+\lambda a_1+((1-\lambda)T_{t_0}+\lambda T_{t_1})B_2^n.
\end{eqnarray}
Taking volumes, since by Minkowski's determinant inequality, for any two symmetric positive definite matrices $A,B$ we have that $|A+B|^\frac{1}{n}\geq|A|^\frac{1}{n}+|B|^\frac{1}{n}$ with equality if and only if $B=s A$ for some $s>0$, we obtain
\begin{eqnarray}\label{eq:Econvex2}
|\mathcal{E}_{t_0^{1-\lambda}t_1^\lambda}(f)|^\frac{1}{n}
&\geq&
|(1-\lambda)T_0+\lambda T_1|^\frac{1}{n}|B_2^n|^\frac{1}{n}\cr
&\geq&
((1-\lambda)|T_0|^\frac{1}{n}+\lambda|T_1|^\frac{1}{n})|B_2^n|^\frac{1}{n}
\cr
&=&(1-\lambda)|\mathcal{E}_{t_0}(f)|^\frac{1}{n}+\lambda|\mathcal{E}_{t_1}(f)|^\frac{1}{n}
\cr
&\geq&
|\mathcal{E}_{t_0}(f)|^\frac{1-\lambda}{n}|\mathcal{E}_{t_1}(f)|^\frac{\lambda}{n},
\end{eqnarray}
where the last inequality is the arithmetic-geometric mean inequality. Consequently,
$$
|\mathcal{E}_{t_0^{1-\lambda}t_1^\lambda}(f)|\geq|\mathcal{E}_{t_0}(f)|^{1-\lambda}|\mathcal{E}_{t_1}(f)|^\lambda
$$
and multiplying by $t_0^{1-\lambda}t_1^\lambda\Vert f\Vert_\infty$
$$
\phi_f(t_0^{1-\lambda}t_1^\lambda)\geq\phi_f(t_0)^{1-\lambda}\phi_f(t_1)^\lambda.
$$
\end{proof}

Now, Theorem \ref{ExistenceUniquenessEllipsoid} will be a consequence of the following
\begin{lemma}
Let $f:\R^n\to[0,+\infty)$ be an integrable log-concave function and let $\phi_f:(0,1]\to\R$ defined as before. Then $\phi_f$ is continuous in $(0,1]$ and
$$
\lim_{t\to0^+}\phi_f(t)=0.
$$
Consequently $\phi_f$ attains its maximum value at some $t_0\in(0,1]$. Furthermore, such $t_0$ is unique.
\begin{proof}
In order to show the continuity of $\phi_f$ in $(0,1]$ from the right it is enough to show that for any $t_0\in(0,1)$ , if $d_{t_0}(\varepsilon)$ is the smallest number such that
$$
K_{t_0+\varepsilon}(f)\subseteq K_{t_0}(f)\subseteq d_{t_0}(\varepsilon)K_{t_0+\varepsilon}(f),
$$
then $\lim_{\varepsilon\to 0^+}d_{t_0}(\varepsilon)=1$, since then
$$
|\mathcal{E}_{t_0+\varepsilon}(f)|\leq |\mathcal{E}_{t_0}(f)|\leq d_{t_0}(\varepsilon)^n|\mathcal{E}_{t_0+\varepsilon}(f)|
$$
and consequently
$$
\lim_{\varepsilon\to 0^+}|\mathcal{E}_{t_0+\varepsilon}(f)|\leq |\mathcal{E}_{t_0}(f)|
$$
and
$$
\lim_{\varepsilon\to 0^+}|\mathcal{E}_{t_0+\varepsilon}(f)|\geq\lim_{\varepsilon\to 0^+}\frac{1}{d_{t_0}(\varepsilon)^n}|\mathcal{E}_{t_0}(f)|=|\mathcal{E}_{t_0}(f)|.
$$
Let us see then that $\lim_{\varepsilon\to 0^+}d_{t_0}(\varepsilon)=1$.

Notice that for every $x\in K_{t_0}(f)\backslash\lim_{\varepsilon\to0^+}K_{t_0+\varepsilon}(f)$ we have that $f(x)=t_0\Vert f\Vert_\infty$. Assume that $\lim_{\varepsilon\to0^+}d_{t_0}(\varepsilon)$ is not 1. Then there exists a segment $[x_0,x_1]$  and a point $c\in(x_0,x_1)$ such that $[x_0,c]$ is contained in $K_{t_0}(f)\backslash\lim_{\varepsilon\to0^+}K_{t_0+\varepsilon}(f)$ and $x_1\in K_{t_1}(f)$ for some $t_1>t_0$. Then, since $f$ is log-concave $f(c)>f(x_0)=t_0$, which contradicts the fact that $[x_0,c]$  is contained in $K_{t_0}(f)\backslash\lim_{\varepsilon\to0^+}K_{t_0+\varepsilon}(f)$.

A similar argument
 proves that $\phi_f$ is continuous from the left.

Let us now prove that $\lim_{t\to0^+}\phi_f(t)=0$. Let $\varepsilon>0$. Since $f$ is integrable, we can find $R(\varepsilon)$ big enough such that
$$
\int_{\R^n\backslash R(\varepsilon)B_2^n}f(x)dx<\frac{\varepsilon}{2}.
$$
Now, for any $t<\frac{\varepsilon}{2\Vert f\Vert_\infty|R(\varepsilon)B_2^n|}$ we have that
\begin{eqnarray*}
t\Vert f\Vert_\infty|K_t(f)|&=&t\Vert f\Vert_\infty|K_t(f)\cap R(\varepsilon)B_2^n| +t\Vert f\Vert_\infty|K_t(f)\backslash
R(\varepsilon)B_2^n|\cr &<&t\Vert f\Vert_\infty|R(\varepsilon)B_2^n|
+\int_{K_t(f)\backslash R(\varepsilon)B_2^n}f(x)dx\cr
&<&\frac{\varepsilon}{2}+\int_{\R^n\backslash
R(\varepsilon)B_2^n}f(x)dx\cr
&<&\frac{\varepsilon}{2}+\frac{\varepsilon}{2}=\varepsilon.
\end{eqnarray*}
Then,
$$
0\leq\lim_{t\to0^+}t\Vert f\Vert_\infty|\mathcal{E}_t(f)|\leq\lim_{t\to0^+}t\Vert f\Vert_\infty|K_t(f)|=0
$$
and so
$$
\lim_{t\to0^+}\phi_f(t)=0.
$$
Consequently $\phi_f$ attains its maximum for some $t_0\in(0,1]$. Let us prove that such $t_0$ is unique. Assume that there exist two different $t_1<t_2$ at which $\phi_f$ attains its maximum. Then, by Lemma \ref{concavityPhi} for any $\lambda\in[0,1]$
$$
\phi_f(t_1^{1-\lambda}t_2^\lambda)=\phi_f(t_1)^{1-\lambda}\phi_f(t_2)^\lambda.
$$
Thus, for any $\lambda\in[0,1]$
$$
|\mathcal{E}_{t_1^{1-\lambda}t_2^\lambda}(f)|^\frac{1}{n}=|\mathcal{E}_{t_1}(f)|^\frac{1-\lambda}{n}|\mathcal{E}_{t_2}(f)|^\frac{\lambda}{n}
$$
and all the inequalities in \eqref{eq:Econvex2} are equalities. This implies that $T_{t_2}$ is a multiple of $T_{t_1}$ and so the ellipsoids $\mathcal{E}_{t_1}$ and $\mathcal{E}_{t_2}$ are homothetic. Besides, since there is equality in the arithmetic-geometric mean inequality both ellipsoids have the same volume and, $\mathcal{E}_{t_2}$ is a translate of $\mathcal{E}_{t_1}$. Thus, for any $\lambda\in[0,1]$ we have
$$
|\mathcal{E}_{t_1^{1-\lambda}t_2^\lambda}(f)|=|\mathcal{E}_{t_1}(f)|.
$$
But then $\phi_f(t_2)>\phi_f(t_1)$, which contradicts the assumption of the maximum being attained at two different points.
\end{proof}
\end{lemma}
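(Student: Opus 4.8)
The plan is to split the claim into two soft facts about the one-variable function $\phi_f$ --- continuity on $(0,1]$ and $\lim_{t\to0^+}\phi_f(t)=0$ --- from which existence of a maximizer follows by compactness, and then to extract uniqueness from the multiplicative concavity established in Lemma~\ref{concavityPhi}. Granting the two facts, $\phi_f$ extends to a continuous function on the compact interval $[0,1]$ by setting $\phi_f(0)=0$, hence attains a maximum on $[0,1]$. This maximum is positive: for every $t\in(0,1)$ the superlevel set $K_t(f)=\{f\ge t\Vert f\Vert_\infty\}$ has nonempty interior, because $f$ exceeds $t\Vert f\Vert_\infty$ somewhere and is continuous on its support, so $\phi_f(t)=t\Vert f\Vert_\infty|\mathcal{E}_t(f)|>0$. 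Therefore the maximum is not attained at $0$ and we obtain $t_0\in(0,1]$ with $\phi_f(t_0)=\max_{(0,1]}\phi_f>0$.

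For the continuity, fix $t_0$ and $\varepsilon>0$ with $0<t_0<t_0+\varepsilon<1$; then $K_{t_0+\varepsilon}(f)\subseteq K_{t_0}(f)$, and writing $\psi(t)=|\mathcal{E}_t(f)|$ (so $\phi_f(t)=t\Vert f\Vert_\infty\psi(t)$) it is enough to prove that $\psi$ is continuous. Let $d_{t_0}(\varepsilon)\ge1$ be the least ratio of a homothety --- centered at an interior point of $K_{t_0+\varepsilon}(f)$ --- carrying $K_{t_0+\varepsilon}(f)$ onto a body containing $K_{t_0}(f)$. Since the John ellipsoid of a convex body is an ellipsoid contained in every larger body and its volume scales by the $n$-th power of the dilation ratio,
\[
\psi(t_0+\varepsilon)\le\psi(t_0)\le d_{t_0}(\varepsilon)^n\,\psi(t_0+\varepsilon),
\]
so right-continuity of $\psi$ at $t_0$ holds once $d_{t_0}(\varepsilon)\to1$, i.e. once $K_{t_0+\varepsilon}(f)\to K_{t_0}(f)$ in the Hausdorff metric. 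This is where log-concavity enters: any $x\in K_{t_0}(f)\setminus\bigcap_{\varepsilon>0}K_{t_0+\varepsilon}(f)$ satisfies $f(x)=t_0\Vert f\Vert_\infty$, and if the limiting body were strictly smaller than $K_{t_0}(f)$ one could pick a segment $[x_0,x_1]$ with $[x_0,c]$ lying on this critical level for some $c$ in its relative interior while $f(x_1)>t_0\Vert f\Vert_\infty$, forcing $f(c)>f(x_0)$ and contradicting log-concavity. The same reasoning from the other side gives left-continuity. I expect this step --- ruling out that the superlevel sets fail to exhaust $K_{t_0}(f)$ --- to be the only one requiring real care; the rest is routine.

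The limit at $0$ is a direct estimate. Given $\varepsilon>0$, integrability lets us choose $R>0$ with $\int_{\R^n\setminus RB_2^n}f<\varepsilon/2$; then for $t<\varepsilon/(2\Vert f\Vert_\infty|RB_2^n|)$,
\[
t\Vert f\Vert_\infty|K_t(f)|\le t\Vert f\Vert_\infty|RB_2^n|+\int_{K_t(f)\setminus RB_2^n}f(x)\,dx<\frac{\varepsilon}{2}+\frac{\varepsilon}{2}=\varepsilon,
\]
and since $|\mathcal{E}_t(f)|\le|K_t(f)|$ this gives $\phi_f(t)\le\varepsilon$, hence $\phi_f(t)\to0$ as $t\to0^+$.

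For uniqueness, suppose the (positive) maximum value $M$ is attained at two points $t_1<t_2$. Fix $\lambda\in(0,1)$. By Lemma~\ref{concavityPhi} and maximality, $M\le\phi_f(t_1^{1-\lambda}t_2^\lambda)\le M$, so $\phi_f(t_1^{1-\lambda}t_2^\lambda)=\phi_f(t_1)^{1-\lambda}\phi_f(t_2)^\lambda$; cancelling the common factor $t_1^{1-\lambda}t_2^\lambda\Vert f\Vert_\infty$ appearing in both sides via the definition of $\phi_f$, this reads $|\mathcal{E}_{t_1^{1-\lambda}t_2^\lambda}(f)|^{1/n}=|\mathcal{E}_{t_1}(f)|^{(1-\lambda)/n}|\mathcal{E}_{t_2}(f)|^{\lambda/n}$, which is precisely equality between the first and last terms of the chain \eqref{eq:Econvex2}; hence every inequality there is an equality. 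In particular equality holds in the arithmetic-geometric mean step, which forces $|\mathcal{E}_{t_1}(f)|=|\mathcal{E}_{t_2}(f)|$. But then $\phi_f(t_1)=t_1\Vert f\Vert_\infty|\mathcal{E}_{t_1}(f)|$ and $\phi_f(t_2)=t_2\Vert f\Vert_\infty|\mathcal{E}_{t_2}(f)|$ would be equal positive numbers with equal positive volumes, forcing $t_1=t_2$, a contradiction. Hence the maximizer $t_0$ is unique.
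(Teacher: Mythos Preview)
Your proof is correct and follows essentially the same approach as the paper's: the continuity argument via $d_{t_0}(\varepsilon)\to1$, the integrability estimate for the limit at $0$, and the equality analysis of the chain \eqref{eq:Econvex2} for uniqueness are all identical in spirit. Your uniqueness step is in fact slightly cleaner than the paper's---once equality in the arithmetic-geometric mean forces $|\mathcal{E}_{t_1}(f)|=|\mathcal{E}_{t_2}(f)|$, the identity $\phi_f(t_1)=\phi_f(t_2)$ immediately gives $t_1=t_2$, whereas the paper detours through the Minkowski-determinant equality case to show the ellipsoids are translates before reaching the same contradiction.
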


Now that we have established the existence and uniqueness of the John's ellipsoid of an integrable log-concave function $f$, we can define the integral ratio of $f$ as
$$
I. rat(f)=\left(\frac{\int_{\R^n}f(x)dx}{\int_{\R^n}\mathcal{E}(f)(x)dx}\right)^\frac{1}{n}=\left(\frac{\int_{\R^n}f(x)dx}{\max_{t\in(0,1]}\phi_f(t)}\right)^\frac{1}{n}.
$$

The integral ratio of a function is an affine invariant, {\it i.e.}, for any affine map $T$ we have that $I.rat(f\circ T)=I.rat(f)$. This is a consequence of the following lemma.
\begin{lemma}\label{LinearMapsAndEllipsoids}
Let $f:\R^n\to\R$ be an integrable log-concave function and let $T$ be an affine map. Then for any $t\in(0,1]$
$$
\mathcal{E}_t(f\circ T^{-1})=T\mathcal{E}_t(f).
$$
As a consequence
$$
\phi_{f\circ T^{-1}}(t)=|T|\phi_f(t),
$$
the maximum of $\phi_{f\circ T^{-1}}$ and $\phi_f$ is attained for the same $t_0$, and
$$
\mathcal{E}(f\circ T^{-1})=\mathcal{E}(f)\circ T^{-1}.
$$
\end{lemma}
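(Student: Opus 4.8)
The plan is to reduce everything to the affine covariance of the classical John ellipsoid of a convex body. First I would record the elementary observation that precomposition with the bijection $T^{-1}$ leaves the sup-norm unchanged, $\Vert f\circ T^{-1}\Vert_\infty=\Vert f\Vert_\infty$, and that the super-level sets transform as sets: for every $t\in(0,1]$,
$$
K_t(f\circ T^{-1})=\{x\in\R^n\,:\,f(T^{-1}x)\geq t\Vert f\Vert_\infty\}=T\,K_t(f).
$$

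Next I would invoke the affine covariance of John's ellipsoid for convex bodies: if $K$ is a convex body and $T$ an affine map, then $\mathcal{E}(TK)=T\mathcal{E}(K)$. This is immediate from the uniqueness part of John's theorem together with the fact that an affine map multiplies all volumes by the same constant $|T|$ (the absolute value of the determinant of its linear part): it carries the maximum-volume ellipsoid inscribed in $K$ onto an ellipsoid inscribed in $TK$ of maximal volume, which by uniqueness must be $\mathcal{E}(TK)$. Combining with the previous step gives
$$
\mathcal{E}_t(f\circ T^{-1})=\mathcal{E}\bigl(K_t(f\circ T^{-1})\bigr)=\mathcal{E}\bigl(T\,K_t(f)\bigr)=T\,\mathcal{E}\bigl(K_t(f)\bigr)=T\,\mathcal{E}_t(f).
$$

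The three announced consequences then follow by direct computation. Taking volumes in the identity just obtained and using $\Vert f\circ T^{-1}\Vert_\infty=\Vert f\Vert_\infty$,
$$
\phi_{f\circ T^{-1}}(t)=t\Vert f\Vert_\infty\,|T\mathcal{E}_t(f)|=|T|\,t\Vert f\Vert_\infty\,|\mathcal{E}_t(f)|=|T|\,\phi_f(t).
$$
Since $\phi_{f\circ T^{-1}}$ and $\phi_f$ differ only by the fixed positive factor $|T|$, the previous lemma shows that each attains its unique maximum at the same point $t_0$. Finally, evaluating the John ellipsoidal function of $f\circ T^{-1}$ at this common $t_0$,
$$
\mathcal{E}(f\circ T^{-1})(x)=t_0\Vert f\Vert_\infty\,\chi_{\mathcal{E}_{t_0}(f\circ T^{-1})}(x)=t_0\Vert f\Vert_\infty\,\chi_{T\mathcal{E}_{t_0}(f)}(x)=t_0\Vert f\Vert_\infty\,\chi_{\mathcal{E}_{t_0}(f)}(T^{-1}x)=\bigl(\mathcal{E}(f)\circ T^{-1}\bigr)(x).
$$
From $\phi_{f\circ T^{-1}}=|T|\phi_f$ and the change-of-variables formula $\int f\circ T^{-1}=|T|\int f$ one also reads off at once that $I.rat$ is affine invariant, since the factor $|T|$ cancels in the ratio.

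The only point requiring care — rather than a genuine obstacle — is the bookkeeping around affine versus linear maps: one must read $|T|$ as the absolute value of the determinant of the linear part of $T$, verify $|T\mathcal{E}|=|T|\,|\mathcal{E}|$ with this meaning, and make sure the level-set identity $K_t(f\circ T^{-1})=T\,K_t(f)$ is applied in the correct direction. The real input is the affine covariance of the classical John ellipsoid, which is a standard consequence of its uniqueness.
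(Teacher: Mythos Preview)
Your argument is correct and follows the same route as the paper: show $K_t(f\circ T^{-1})=T\,K_t(f)$ and then use the affine covariance of the classical John ellipsoid to conclude $\mathcal{E}_t(f\circ T^{-1})=T\,\mathcal{E}_t(f)$. The paper's own proof is considerably terser (it records only the level-set identity and states the consequence), so your version merely spells out the steps the paper leaves implicit.
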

\begin{proof}
Notice that
$$
K_t(f\circ T^{-1})=\{x\in\R^n\,:\,f(T^{-1}x)\geq t\Vert f\Vert_\infty\}=T\{x\in\R^n\,:\,f(x)\geq t\Vert f\Vert_\infty\}=TK_t(f).
$$
Consequently
$$
\mathcal{E}_t(f\circ T^{-1})=T\mathcal{E}_t(f).
$$
\end{proof}

\section{John's position of a log-concave function and maximal integral ratio}\label{CharacterizationJohnsPosition}

A log-concave function will be said to be in John's position if $\mathcal{E}(f)=(B_2^n)^{t_0\Vert f\Vert_\infty}$ for some $t_0\in(0,1]$. By Lemma \ref{LinearMapsAndEllipsoids}, for any log-concave integrable function there exists an affine map $T$ such that $f\circ T$ is in John's position. In this section we will give a characterization for a function to be in John's position. As a consequence we will obtain an estimate for the function $\phi_f(t)$ that will allow us to give an upper bound for the integral ratio of any integrable log-concave function. We will follow the ideas in \cite{GS} and prove the following

\begin{thm}\label{FunctionalJohn}
Let $f:\R^n\to\R$ be an even integrable log-concave function and $t_0\in(0,1]$. Assume that $(B_2^n)^{t_0\Vert f\Vert_\infty}\leq f$. Then the following are equivalent:
\begin{enumerate}
\item[1.] $\mathcal{E}(f)=(B_2^n)^{t_0\Vert f\Vert_\infty}$.
\item[2.] There exist
 \begin{itemize}
 \item  $\{u_{ij}\}\subseteq \partial K_{t_0}\cap S^{n-1}$, $1\leq i\leq m, 1\leq j\leq m^\prime(i)$
 \item  $\{\lambda_i\}_{i=1}^m$, $\{\mu_{ij}\}_{j=1}^{m^\prime(i)}$, with $\lambda_i,\mu_{ij}>0$ and
 \item  $\{\alpha_i\}_{i=1}^m$, with $\alpha_i\in\left[t_0\left.\frac{d}{dt^+}r(K_t)\right|_{t=t_0},t_0\left.\frac{d}{dt^-}r(K_t)\right|_{t=t_0}\right]$
 \end{itemize}
with $1\leq m\leq 1+\frac{n(n+1)}{2}$ and $n\leq m^\prime(i)\leq \frac{n(n+1)}{2}$ for any $1\leq i\leq m$, such that
$$
\sum_{i=1}^m\lambda_i\alpha_i=-1,
$$
$$
I_{n}=\sum_{i=1}^m\sum_{j=1}^{m^\prime(i)}\lambda_i\mu_{ij} u_{ij}\otimes u_{ij},
$$
and for any $t\in (0,1]$, and any $1\leq i\leq m$ if $TB_2^n\subseteq K_t(f)$
$$
-\alpha_i\log\left(\frac{t}{t_0}\right)+\sum_{j=1}^{m^\prime(i)}\mu_{ij}\langle Tu_{ij},u_{ij}\rangle\leq \sum_{j=1}^{m^\prime(i)}\mu_{ij}.
$$
\end{enumerate}
\end{thm}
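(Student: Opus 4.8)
The plan is to recast condition 1 as a convex optimality condition and to read off condition 2 from the resulting normal-cone membership, following the method of \cite{GS}. First I would reformulate. By the construction of $\mathcal{E}(f)$ in Section \ref{ExistenceUniquenessSection}, the identity $\mathcal{E}(f)=(B_2^n)^{t_0\Vert f\Vert_\infty}$ is equivalent to: $(t_0,B_2^n)$ realizes $\max\{t\Vert f\Vert_\infty|\mathcal{E}|:\mathcal{E}^{t\Vert f\Vert_\infty}\leq f\}$. Writing an ellipsoid through its symmetric positive definite shape matrix, $\mathcal{E}=c+TB_2^n$, and using that $f$ even makes $K_t(f)$ symmetric (so $TB_2^n=\tfrac12(c+TB_2^n)+\tfrac12(-c+TB_2^n)\subseteq K_t(f)$ and one may take $c=0$), this says that $(t_0,I_n)$ maximizes $t\det T$ over $\{(t,T):t\in(0,1],\,T\succ 0\text{ symmetric},\,TB_2^n\subseteq K_t(f)\}$. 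After the substitution $\tau=\log t$ it becomes the maximization of the concave $\Phi(\tau,T)=\tau+\log\det T$ over the convex set
$$
\mathcal{F}=\{(\tau,T)\in\R\times\mathcal{S}_n:\tau\leq 0,\ T\succ 0,\ |Tu|\leq h_{K_{e^\tau}(f)}(u)\text{ for all }u\in S^{n-1}\},
$$
where $\mathcal{S}_n$ is the space of symmetric $n\times n$ matrices and $h_K(u)=\sup_{x\in K}\langle x,u\rangle$; $\mathcal{F}$ is convex because $\tau\mapsto h_{K_{e^\tau}(f)}(u)$ is concave, which is exactly the inclusion $K_{t_0^{1-\lambda}t_1^\lambda}(f)\supseteq(1-\lambda)K_{t_0}(f)+\lambda K_{t_1}(f)$ of Lemma \ref{concavityPhi} read on support functions. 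Since $B_2^n$ is a maximal-volume ellipsoid in $K_{t_0}(f)$ we have $r(K_{t_0}(f))=1$, and the contact points $\partial K_{t_0}(f)\cap S^{n-1}$ are exactly the $u\in S^{n-1}$ with $h_{K_{t_0}(f)}(u)=1$, i.e. the directions realizing the inradius.

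For $2\Rightarrow 1$ I would argue directly. Given $\mathcal{E}^a\leq f$, write $a=t\Vert f\Vert_\infty$, $\mathcal{E}=c+TB_2^n$ with $T\succ 0$ symmetric, so $TB_2^n\subseteq K_t(f)$ by evenness; applying the inequality in 2 with this $T$ and $t$, multiplying by $\lambda_i>0$, summing over $i$, and using $\langle Tu,u\rangle=\langle T,u\otimes u\rangle$, $\sum_{i,j}\lambda_i\mu_{ij}u_{ij}\otimes u_{ij}=I_n$ (whence $\sum_{i,j}\lambda_i\mu_{ij}=\textrm{tr}\,I_n=n$) and $\sum_i\lambda_i\alpha_i=-1$, one obtains $\textrm{tr}\,T=\big\langle T,\sum_{i,j}\lambda_i\mu_{ij}u_{ij}\otimes u_{ij}\big\rangle\leq n-\log(t/t_0)$. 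Combining with $\log\det T\leq\textrm{tr}\,T-n$ (valid for $T\succ 0$) gives $\log(t\det T)\leq\log t_0$, i.e. $\int\mathcal{E}^a\leq t_0\Vert f\Vert_\infty|B_2^n|=\int(B_2^n)^{t_0\Vert f\Vert_\infty}$; since $(B_2^n)^{t_0\Vert f\Vert_\infty}\leq f$ by hypothesis, the uniqueness of Section \ref{ExistenceUniquenessSection} forces $\mathcal{E}(f)=(B_2^n)^{t_0\Vert f\Vert_\infty}$.

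For $1\Rightarrow 2$, optimality of $(\tau_0,I_n)$ for the concave $\Phi$ over $\mathcal{F}$ means $\nabla\Phi(\tau_0,I_n)=(1,I_n)$ lies in the normal cone $N(\mathcal{F},(\tau_0,I_n))$. The active constraints there are $\tau\leq 0$ (only if $t_0=1$), giving the ray generated by $(1,0)$, and $|Tu|-h_{K_{e^\tau}(f)}(u)\leq 0$ for each contact $u$, whose subdifferential at $(\tau_0,I_n)$ is $\{(\gamma,u\otimes u):\gamma\in[-h'_-(u),-h'_+(u)]\}$, where $h'_\pm(u)=\frac{d}{d\tau^\pm}h_{K_{e^\tau}(f)}(u)\big|_{\tau_0}$ (the $T$-gradient of $|Tu|$ at $I_n$ is $u\otimes u$, while $\tau\mapsto h_{K_{e^\tau}(f)}(u)$ is only concave, so has an interval of subgradients). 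Rather than invoke a constraint qualification for this program with infinitely many constraints as a black box, I would show directly, as in \cite{GS}, that $(1,I_n)$ is a positive combination of these generators: otherwise one separates by some $(\sigma,H)\in\R\times\mathcal{S}_n$ with $\sigma+\textrm{tr}\,H>0$, with $\sigma\gamma+\langle Hu,u\rangle\leq 0$ for all contact $u$ and admissible $\gamma$, and with $\sigma\leq 0$ if $t_0=1$; then $(\tau_0+\varepsilon\sigma,I_n+\varepsilon H)$ is feasible for small $\varepsilon>0$ — slack at non-contact directions by compactness, slack at contact directions since $\langle Hu,u\rangle\leq\sigma h'_\pm(u)$ matches the one-sided expansion of $h_{K_{e^{\tau_0+\varepsilon\sigma}}(f)}(u)$ — yet $\Phi(\tau_0+\varepsilon\sigma,I_n+\varepsilon H)=\tau_0+\varepsilon(\sigma+\textrm{tr}\,H)+O(\varepsilon^2)>\tau_0$, a contradiction with optimality.

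It remains to cast this positive combination into the stated shape. From $(1,I_n)=\sum_k\zeta_k(\gamma_k,u_k\otimes u_k)$ (plus $\zeta_0(1,0)$ if $t_0=1$), grouping the indices by a common value of $\gamma$, normalizing the weights in each group to sum to $1$, and setting $\lambda_i$ to be the total weight of group $i$, $\alpha_i=-\gamma$ on group $i$, $\mu_{ij}$ the normalized weights and $u_{ij}=u_k\in\partial K_{t_0}(f)\cap S^{n-1}$, one gets $\sum_i\lambda_i\alpha_i=-1$ and $I_n=\sum_{i,j}\lambda_i\mu_{ij}u_{ij}\otimes u_{ij}$; the last inequality in 2 for group $i$ is precisely $(-\alpha_i,\sum_j\mu_{ij}u_{ij}\otimes u_{ij})\in N(\mathcal{F},(\tau_0,I_n))$, which holds because $\sum_j\mu_{ij}h_{K_{e^\tau}(f)}(u_{ij})$ is concave, equals $\sum_j\mu_{ij}$ at $\tau_0$, has one-sided derivatives there squeezed between $\sum_j\mu_{ij}h'_\pm(u_{ij})$ with $\alpha_i\in[h'_+(u_{ij}),h'_-(u_{ij})]$ for every $j$, while $\langle Tu_{ij},u_{ij}\rangle\leq h_{K_t(f)}(u_{ij})$ whenever $TB_2^n\subseteq K_t(f)$. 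The range $\alpha_i\in[t_0\frac{d}{dt^+}r(K_t(f))|_{t_0},t_0\frac{d}{dt^-}r(K_t(f))|_{t_0}]$ then follows from $[h'_+(u),h'_-(u)]\subseteq[\frac{d}{d\tau^+}r(K_{e^\tau}(f))|_{\tau_0},\frac{d}{d\tau^-}r(K_{e^\tau}(f))|_{\tau_0}]$, since $h_{K_{e^\tau}(f)}(u)-r(K_{e^\tau}(f))\geq 0$ vanishes at $\tau_0$ for each contact $u$ and $t_0\frac{d}{dt^\pm}=\frac{d}{d\tau^\pm}$ there; the bounds $m\leq 1+\frac{n(n+1)}{2}$ and $m'(i)\leq\frac{n(n+1)}{2}$ are Carathéodory in $\R\times\mathcal{S}_n$ and in $\mathcal{S}_n$, and $m'(i)\geq n$ comes from arranging the decomposition so each group is itself a symmetric John decomposition, as in \cite{GS}. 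The main obstacle is exactly this last passage: $\mathcal{F}$ is cut out by infinitely many constraints and $\tau\mapsto h_{K_{e^\tau}(f)}(u)$ is only concave, so its subgradients genuinely spread into intervals (hence the $\alpha_i$ get a range, not a value); getting the separation argument to produce honestly feasible perturbations through this non-smoothness, and then the combinatorial bookkeeping needed to reach the grouped form with the precise cardinality bounds, is where the real work lies, the reformulation, the implication $2\Rightarrow 1$, and the support-function/inradius translation being routine.
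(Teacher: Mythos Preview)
Your approach is correct and follows the same Gruber--Schuster outline as the paper. The paper differs in presentation: rather than maximizing $\Phi(\tau,T)=\tau+\log\det T$ over your feasible set $\mathcal{F}$, it sets up two convex sets $E=\{(s,x_T):TB_2^n\subseteq K_{e^{-s}}(f)\}$ and $C_1=\{(s,x_T):e^{-s}|T|\geq t_0\}$ in $\R\times L$ (with $L$ the $\tfrac{n(n+1)}{2}$-dimensional space of symmetric matrices), shows they touch only at $(-\log t_0,x_{I_n})$, and obtains $(-1,x_{I_n})\in N(E,(-\log t_0,x_{I_n}))$ from the separating hyperplane, which it computes to be the unique supporting hyperplane to $C_1$ there when $t_0<1$ (with a short extra argument when $t_0=1$). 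More substantively, the paper bypasses your separation/perturbation argument for the generators of the normal cone: instead it computes the \emph{projection} $P_{e_1^\perp}N(E,(-\log t_0,x_{I_n}))=\{0\}\times\textrm{pos}\{x_{uu^t}:u\in S^{n-1}\cap\partial K_{t_0}\}$ via the identity \eqref{DualityConesSubspaces}, then applies Carath\'eodory twice---once in $\R\times L$ to write $(-1,x_{I_n})=\sum_i\lambda_i(\alpha_i,x_{T_i})$ with each $(\alpha_i,x_{T_i})\in N(E,\cdot)$, and once in $L$ to decompose each $x_{T_i}=\sum_j\mu_{ij}x_{u_{ij}u_{ij}^t}$. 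This produces the double-indexed structure directly and makes the ``last inequality'' in condition 2 immediate (it is precisely the membership $(\alpha_i,x_{T_i})\in N(E,\cdot)$). Your single Carath\'eodory followed by grouping on common $\gamma$ reaches the same destination, but the paper's projection device is exactly what lets it sidestep the non-smoothness bookkeeping you identify as the main obstacle. The derivation of the range for the $\alpha_i$ and the implication $2\Rightarrow 1$ are handled identically in both.
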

\begin{proof}
Any ellipsoid $\mathcal{E}\subseteq\R^n$ is $\mathcal{E}=TB_2^n$ with $T$ a symmetric positive definite matrix. For any symmetric positive definite matrix $T$, we will call $x_T\in\R^{n^2}$ the vector
$$
x_T=\left(t_{11},\dots,t_{1n},t_{21},\dots,t_{2n},t_{n1},\dots,t_{nn}\right)^t.
$$
Notice that the set $C_{\textrm{spd}}=\{x_{T}\,:\,T\textrm{ symmetric positive definite}\}$ is a convex cone contained in a linear subspace $L\subseteq\R^{n^2}$ of dimension $\frac{n(n+1)}{2}$. We will consider the following two sets in $\R\times L$:
$$
E=\{(s,x_T)\in[0,+\infty]\times L\,:\,T\in C_{\textrm{spd}}, (TB_2^n)^{e^{-s}\Vert f\Vert_\infty}\leq f\}
$$
and
$$
C_1=\{(s,x_T)\in[0,+\infty]\times L:T\in C_{\textrm{spd}}, e^{-s}|T|\geq t_0\}.
$$
First of all, notice that both of them are convex. In order to show that $E$ is convex set let $(s_i,x_{T_i})\in E$, $i=1,2$. From the definition of $E$, this means that
$$
K_{e^{-s_i}}\supseteq T_iB_2^n.
$$
By \eqref{eq:Econvex} we get that
\[
K_{e^{-(1-\lambda)s_1-\lambda s_2}}\supseteq ((1-\lambda)T_1+\lambda T_2)B^n_2
\]
from which $((1-\lambda)s_1+\lambda s_2,x_{(1-\lambda)T_1+\lambda T_2})\in E$.

In order to see that $C_1$ is convex let $(s_i,x_{T_i})\in C_1$, $i=1,2$.
Then $e^{-s_i}|T_i|\geq t_0$, $i=1,2$. Minkowski's determinant inequality and the arithmetic-geometric mean imply that
\begin{equation*}
\begin{split}
e^{-\frac{(1-\lambda)s_1+\lambda s_2}{n}}|(1-\lambda)T_1+\lambda T_2|^\frac{1}{n}&\geq\,e^{-\frac{(1-\lambda)s_1+\lambda s_2}{n}}((1-\lambda)|T_1|^{\frac{1}{n}}+\lambda|T_2|^{\frac{1}{n}})\\
&\geq e^{-\frac{(1-\lambda)s_1+\lambda s_2}{n}}|T_1|^{\frac{(1-\lambda)}{n}}|T_2|^{\frac{\lambda}{n}}\\
&=\left(e^{-s_1}|T_1|\right)^\frac{1-\lambda}{n}\left(e^{-s_2}|T_2|\right)^\frac{\lambda}{n}\\
&\geq t_0^\frac{1}{n},
\end{split}
\end{equation*}
from which we conclude $((1-\lambda)s_1+\lambda s_2,(1-\lambda)T_1+\lambda T_2)\in C_1$.

Second, notice that if $s\neq 0$ and $e^{-s}|T|=t_0$, then the point $(s,x_T)$ belongs to the boundary of $C_1$, which is smooth around it. Then there exists a unique supporting hyperplane of $C_1$ at $(s,x_T)$. Since the function $g(s)=e^\frac{s}{n}$ is convex, its graph is above its tangent line at the point $(-\log t_0, t_0^{-\frac{1}{n}})$. Thus, for any $(s,x_T)\in C_1$,
\begin{eqnarray*}
\langle(-1,x_{I_n}), (s,x_T) \rangle&=&-s+\textrm{tr}(T)\geq -s+n|T|^\frac{1}{n}\cr
&\geq&-s+nt_0^\frac{1}{n}e^\frac{s}{n}\cr
&\geq&-s+nt_0^\frac{1}{n}\left(t_0^{-\frac{1}{n}}+\frac{t_0^{-\frac{1}{n}}}{n}(s+\log t_0)\right)\cr
&=&\log t_0 +n\cr
&=&\langle(-1,x_{I_n}),(-\log t_0, x_{I_n})\rangle.
\end{eqnarray*}
Consequently, the supporting hyperplane to $C_1$ at $(-\log t_0, x_{I_n})$ is orthogonal to the vector $(-1,x_{I_n})$. If $t_0=1$, then the supporting hyperplane at the point $(-\log t_0, x_{I_n})=(0,x_{I_n})$ is not unique. Notice that in such case, for any $a\geq -1$
\begin{eqnarray*}
\langle(a,x_{I_n}), (s,x_T) \rangle&=&as+\textrm{tr}(T)\geq as+n|T|^\frac{1}{n}\cr
&\geq&as+nt_0^\frac{1}{n}e^\frac{s}{n}\cr
&\geq&as+nt_0^\frac{1}{n}\left(1+\frac{1}{n}(s+1)\right)\cr
&=&(a+1)s +n\cr
&\geq&n\cr
&=&\langle(a,x_{I_n}),(0, x_{I_n})\rangle.
\end{eqnarray*}
and if $a<-1$ then there exist some $s>0$ such that $e^\frac{s}{n}<1-\frac{as}{n}$. Then
\begin{eqnarray*}
\langle (a,x_{I_n}), (s,e^\frac{s}{n}x_{I_n})\rangle&=&as+ne^\frac{s}{n}\cr
&<& n\cr
&=& \langle (a,x_{I_n}), (0,x_{I_n})\rangle.
\end{eqnarray*}
Thus, if $t_0=1$, a hyperplane orthogonal to a vector $(a,x_{I_n})$ through $(0,x_{I_n})$ is a supporting hyperplane to $C_1$ if and only if $a\geq -1$. Besides,
$$
P_{e_1^\perp}N(C_1,(0,x_{I_n}))=N_{e_1^\perp}(C_1\cap{e_1^\perp},(0,x_{I_n}))=\textrm{pos}\{-x_{I_n}\}.
$$
(The proof of the last inequality can be found in the proof of the geometric case in \cite{GS}). Thus all the supporting hyperplanes to $C_1$ at $(0,x_{I_n})$ are hyperplanes orthogonal to some vector $(a,x_{I_n})$ with $a\geq -1$.

Now, let us assume that $\mathcal{E}(f)=(B_2^n)^{t_0\Vert f\Vert_\infty}$. Then, since $\mathcal{E}(f)$ is unique, $(-\log t_0, x_{I_n})$ is the unique common point to $E$ and $C_1$. Since both sets are convex there exists a hyperplane through $(-\log t_0, x_{I_n})$ separating them. If $t_0\neq 1$ this hyperplane has to be orthogonal to the vector $(-1,x_{I_n})$. If $t_0=1$, this hyperplane is not necessarily unique but it has to be orthogonal to some vector $(a,x_{I_n})$ with $a\geq -1$ and for every $(s,x_T)\in E$,
$$
\langle (a,x_{I_n}),(s,x_T)\rangle=as+\textrm{tr}(T)
\leq n=\langle (a,x_{I_n}),(0,x_{I_n})\rangle.
$$
Thus, if a vector $(a,x_{I_n})$ verifies this condition for any $(s, x_T)\in E,$ so does $(-1,x_{I_n})$ and in any case, the vector $(-1,x_{I_n})$ belongs to $N(E,(-\log t_0, x_{I_n}))$, the normal cone to $E$ at $(-\log t_0, x_{I_n})$.

Notice that $$E\cap(\{-\log t_0\}\times L)=\{(-\log t_0,x_T)\,:\,T\in C_{\textrm{spd}}, T B_2^n\subseteq K_{t_0}\}=\{-\log t_0\}\times E_0,$$
where
\begin{eqnarray*}
E_0&=&\{x_T\in L\,:\,T\in C_{\textrm{spd}},TB_2^n\subseteq K_{t_0}\}\cr
&=&\{x_T\in L\,:\,T\in C_{\textrm{spd}},\langle Tu,v\rangle\leq h_{K_{t_0}}(v)\,\forall u,v\in S^{n-1}\}\cr
&=&\{x_T\in L\,:\,T\in C_{\textrm{spd}},\langle x_T,x_{uv^t}\rangle\leq h_{K_{t_0}}(v)\,\forall u,v\in S^{n-1}\}.
\end{eqnarray*}
Therefore $E_0$ is given by the intersection of the convex cone $C_{\textrm{spd}}$ with a family of halfspaces $H_{u,v}$ that change continuously with $u,v\in S^{n-1}$. Then the translation of the support cone $x_{I_n}+S(E_0,x_{I_n})$ is the intersection of $C_{\textrm{spd}}$ with the halfspaces that pass through $x_{I_n}$
$$
x_{I_n}+S(E_0,x_{I_n})=\{x_T\in L\,:\,\langle Tu,v\rangle\leq h_{K_{t_0}}(v)\,\forall u,v\in S^{n-1}\textrm{ s.t. }\langle u,v\rangle=h_{K_{t_0}}(v)\}.
$$
Since $B_2^n\subseteq K_{t_0}$, the condition $\langle u,v\rangle=h_{K_{t_0}}(v)$ only occurs when $u=v$ and $u\in S^{n-1}\cap\partial K_{t_0}$ and then
\begin{eqnarray*}
x_{I_n}+S(E_0,x_{I_n})&=&\{x_T\in L\,:\,T\in C_{\textrm{spd}},\langle Tu,u\rangle\leq h_{K_{t_0}}(u),\,\forall u\in S^{n-1}\cap\partial K_{t_0}\}\cr
&=&\{x_T\in L\,:\,T\in C_{\textrm{spd}},\langle x_T,x_{uu^t}\rangle\leq 1,\,\forall u\in S^{n-1}\cap\partial K_{t_0}\}\cr
\end{eqnarray*}
Then the dual cone of $S(E_0,x_{I_n})$ is
$$
N(E_0,x_{I_n})=\textrm{pos}\{x_{uu^t}\,:\,u\in S^{n-1}\cap\partial K_{t_0}\}
$$
and so
$$N_{e_1^\perp}\left((E\cap(-\log t_0e_1+e_1^{\perp}), (-\log t_0,x_{I_n})\right)=\{0\}\times\textrm{pos}\{x_{uu^t}\,:\,u\in S^{n-1}\cap\partial K_{t_0}\}.$$
Since, by (\ref{DualityConesSubspaces}),
$$
N_{e_1^\perp}\left((E\cap(-\log t_0e_1+e_1^{\perp}), (-\log t_0,x_{I_n})\right)=P_{e_1^\perp}N(E,(-\log t_0,x_{I_n}))
$$
we have, by Caratheodory's theorem, that for any vector $(\alpha,x_T)\in N(E,(-\log t_0,x_{I_n}))$ there exist some positive $\{\mu_j\}_{j=1}^{m^\prime}$ and some vectors $\{u_j\}_{j=1}^{m^\prime}$ in $S^{n-1}\cap\partial K_{t_0}$ with $1\leq m^\prime\leq\frac{n(n+1)}{2}$ such that
$$
(\alpha,x_{T})=\left(\alpha,\sum_{j=1}^{m^\prime}\mu_jx_{u_j u_j^t}\right).
$$
Now, by Caratheodory's theorem again, there exist some vectors $\{(\alpha_i,x_{T_i})\}_{i=1}^{m}\in  N(E,(-\log t_0,x_{I_n}))$ and some positive numbers $\{\lambda_i\}_{i=1}^{m}$, with $1\leq m\leq 1+\frac{n(n+1)}{2}$ such that
$$
(-1,x_{I_n})=\sum_{i=1}^{m}\lambda_i(\alpha_i,x_{T_i})=\sum_{i=1}^{m}\lambda_i(\alpha_i,\sum_{j=1}^{m^\prime(i)}\mu_{ij}x_{u_{ij} u_{ij}^t}).
$$

Equivalently,  there exist some positive $\{\lambda_i\}_{i=1}^m$, $\{\mu_{ij}\}_{j=1}^{m^\prime(i)}$, some vectors $\{u_{ij}\}_{j=1}^{m^\prime(i)}$ in $S^{n-1}\cap\partial K_{t_0}$ and some numbers $\{\alpha_i\}_{i=1}^m$ such that
$$
\sum_{i=1}^m\lambda_i\alpha_i=-1,
$$
$$
I_{n}=\sum_{i=1}^m\sum_{j=1}^{m^\prime(i)}\lambda_i\mu_{ij} u_i\otimes u_i,
$$
and for any $t\in (0,1]$, and any $1\leq i\leq m$ if $TB_2^n\subset K_t$
$$
-\alpha_i\log\left(\frac{t}{t_0}\right)+\sum_{j=1}^{m^\prime(i)}\mu_{ij}\langle x_T,x_{u_{ij}u_{ij}^t}\rangle\leq \sum_{j=1}^{m^\prime(i)}\mu_{ij}.
$$
Since for any vector $u$ and any symmetric positive definite $T$ we have that
$$
\langle Tu,u\rangle=\sum_{i,j=1}^nt_{ij}u_iu_j=\langle x_T,x_{uu^t}\rangle,
$$
the last inequality is the same as
$$
-\alpha_i\log\left(\frac{t}{t_0}\right)+\sum_{j=1}^{m^\prime(i)}\mu_{ij}\langle Tu_{ij},u_{ij}\rangle\leq \sum_{j=1}^{m^\prime(i)}\mu_{ij}.
$$

Finally, notice that since $r(K_t)B_2^n\subseteq K_t$, then if a vector $(\alpha,\sum_{j=1}^{m^\prime}x_{u_ju_j^t})$, with $u_j\in S^{n-1}$ belongs to the normal cone $N(E,(s_0,x_{I_n}))$, it has to verify that
$$
-\alpha\log\left(\frac{t}{t_0}\right)+r(K_t)\leq 1
$$
and so for any $t> t_0$
$$
\alpha\geq\frac{r(K_t)-1}{\log\left(\frac{t}{t_0}\right)}
$$
and for any $t<t_0$
$$
\alpha\leq\frac{r(K_t)-1}{\log\left(\frac{t}{t_0}\right)}
$$
Thus, $\alpha$ belongs to the interval
$$
\left[\left.t_0\frac{d^+}{dt}r(K_t)\right|_{t=t_0},\left.t_0\frac{d^-}{dt}r(K_t)\right|_{t=t_0}\right]
$$
and so all the $\alpha_i$ belong to this interval.

Now assume that 2 holds. Then, since $(B_2^n)^{t_0\Vert f\Vert_\infty}\leq f$ we have that $(-\log t_0, x_{I_n})\in E$ and since the vectors $u_{ij}\in\partial K_{t_0}\cap S^{n-1}$, $(-\log t_0, x_{\lambda I_n})\notin E$ for any $\lambda>1$. Thus $(-\log t_0, x_{I_n})\in \partial E$ and we can consider the normal cone of $E$ at $(-\log t_0, x_{I_n})$, $N(E, (-\log t_0, x_{I_n}))$. The conditions in 2 say that
$$
(-1,x_{I_n})=\sum_{i=1}^m\lambda_i(\alpha_i,\sum_{j=1}^{m^\prime(i)}\mu_{ij}x_{u_{ij}u_{ij}^t}),
$$
with $(\alpha_i,\sum_{j=1}^{m^\prime(i)}\mu_{ij}x_{u_{ij}u_{ij}^t})\in N(E, (-\log t_0, x_{I_n}))$ and so $(-1,x_{I_n})\in N(E, (-\log t_0, x_{I_n}))$. Indeed, for any $t\in(0,1]$ and $\mathcal{E}=TB_2^n$ such that $(-\log t, x_T)\in E$ we have that
\begin{eqnarray*}
\langle (-\log t,x_T),(-1,x_{I_n})\rangle&=&\sum_{i=1}^m\lambda_i(-\alpha_i\log t+\sum_{j=1}^{m^\prime(i)}\mu_{ij}\langle x_T,x_{u_{ij}u_{ij}^t}\rangle)\cr
&\leq&\sum_{i=1}^m\lambda_i(\sum_{j=1}^{m^\prime(i)}-\alpha_i\log t_0)+\sum_{i=1}^m\sum_{j=1}^{m^\prime(i)}\lambda_i\mu_{ij}\cr
&=&n+\log t_0\cr
&=&\langle (-\log t_0,x_{I_n}),(-1,x_{I_n})\rangle.
\end{eqnarray*}
Thus, the supporting hyperplane to $C_1$ at $(-\log t_0,x_{I_n})$ orthogonal to $(-1,x_{I_n})$ is also a supporting hyperplane to $E$ at $(-\log t_0,x_{I_n})$ and so this is the unique point in the intersection of $C_1$ and $E$. Thus, $\mathcal{E}(f)=(B_2^n)^{t_0\Vert f\Vert_\infty}$.
\end{proof}
We also have a version of this theorem when we do not assume $f$ to be even:
\begin{thm}\label{FunctionalJohnNonSymmetric}
Let $f:\R^n\to\R$ be an integrable log-concave function and $t_0\in(0,1]$. Assume that $(B_2^n)^{t_0\Vert f\Vert_\infty}\leq f$. Then the following are equivalent:
\begin{enumerate}
\item[1.] $\mathcal{E}(f)=(B_2^n)^{t_0\Vert f\Vert_\infty}$.
\item[2.] There exist
 \begin{itemize}
 \item  $\{u_{ij}\}_{j=1}\subseteq \partial K_{t_0}\cap S^{n-1}$, $1\leq i\leq n$, $1\leq j\leq  m^\prime(i)$
 \item  $\{\lambda_i\}_{i=1}^m$, $\{\mu_{ij}\}_{j=1}^{m^\prime(i)}$, with $\lambda_i,\mu_{ij}>0$ and
 \item  $\{\alpha_i\}_{i=1}^m$, with $\alpha_i\in\left[t_0\left.\frac{d}{dt^+}r(K_t)\right|_{t=t_0},t_0\left.\frac{d}{dt^-}r(K_t)\right|_{t=t_0}\right]$
 \end{itemize}
 with , $1\leq 1+\frac{n(n+3)}{2}$ and $n\leq m^\prime(i)\leq \frac{n(n+3)}{2}$ for any $1\leq i\leq m$, such that
$$
\sum_{i=1}^m\lambda_i\alpha_i=-1,
$$
$$
\sum_{i=1}^m\sum_{j=1}^{m^\prime(i)}\lambda_i\mu_{ij}u_{ij}=0
$$
$$
I_{n}=\sum_{i=1}^m\sum_{j=1}^{m^\prime(i)}\lambda_i\mu_{ij} u_{ij}\otimes u_{ij},
$$
and for any $t\in (0,1]$, and any $1\leq i\leq m$ if $c+TB_2^n\subseteq K_t(f)$
$$
-\alpha_i\log\left(\frac{t}{t_0}\right)+\sum_{j=1}^{m^\prime(i)}\mu_{ij}\langle c,u_{ij}\rangle+\sum_{j=1}^{m^\prime(i)}\mu_{ij}\langle Tu_{ij},u_{ij}\rangle\leq \sum_{j=1}^{m^\prime(i)}\mu_{ij}.
$$
\end{enumerate}
\end{thm}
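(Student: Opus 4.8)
The plan is to repeat, essentially verbatim, the proof of Theorem~\ref{FunctionalJohn}, the only change being that the competing ellipsoids are no longer required to be centered: we parametrize them as $c+TB_2^n$ with $T$ symmetric positive definite and $c\in\R^n$ arbitrary. Accordingly I would work in $\R\times\R^n\times L$, whose dimension is $1+n+\frac{n(n+1)}{2}=1+\frac{n(n+3)}{2}$, writing points as $(s,c,x_T)$, and set
$$
E=\{(s,c,x_T)\,:\,T\in C_{\textrm{spd}},\ c+TB_2^n\subseteq K_{e^{-s}}(f)\},\qquad C_1=\{(s,c,x_T)\,:\,T\in C_{\textrm{spd}},\ e^{-s}|T|\geq t_0\}.
$$
Convexity of $C_1$ is exactly as before (it does not involve $c$, so $C_1$ is a cylinder over the set appearing in Theorem~\ref{FunctionalJohn}), and convexity of $E$ follows from \eqref{eq:Econvex} once one notes that $(1-\lambda)(c_1+T_1B_2^n)+\lambda(c_2+T_2B_2^n)$ is centered at $(1-\lambda)c_1+\lambda c_2$ and contains that center translate of $((1-\lambda)T_1+\lambda T_2)B_2^n$.

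Because $C_1$ is a cylinder in the $c$-direction, its normal cone at $(-\log t_0,0,x_{I_n})$ has vanishing $c$-component, and the tangent-line computation of Theorem~\ref{FunctionalJohn} shows it is generated by $(-1,0,x_{I_n})$ if $t_0<1$, and spanned by the vectors $(a,0,x_{I_n})$ with $a\geq-1$ if $t_0=1$. Assuming $\mathcal{E}(f)=(B_2^n)^{t_0\Vert f\Vert_\infty}$, the point $(-\log t_0,0,x_{I_n})$ is the unique common point of the convex sets $E$ and $C_1$, and separating them gives $(-1,0,x_{I_n})\in N(E,(-\log t_0,0,x_{I_n}))$. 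To decompose this vector I would describe the slice $E\cap(\{-\log t_0\}\times\R^n\times L)$ by the inequalities $\langle c,v\rangle+\langle Tu,v\rangle\leq h_{K_{t_0}}(v)$ for $u,v\in S^{n-1}$; since $B_2^n\subseteq K_{t_0}$, the only constraints active at $(0,x_{I_n})$ are those with $u=v\in S^{n-1}\cap\partial K_{t_0}$, so the support cone of the slice there is cut out by $\langle c,u\rangle+\langle x_T,x_{uu^t}\rangle\leq0$ and its polar is $\textrm{pos}\{(u,x_{uu^t})\,:\,u\in S^{n-1}\cap\partial K_{t_0}\}$. Using \eqref{DualityConesSubspaces} to identify this polar with $P_{e_1^\perp}N(E,(-\log t_0,0,x_{I_n}))$, and applying Caratheodory's theorem twice — once in $e_1^\perp$, of dimension $\frac{n(n+3)}{2}$, to bound the $m^\prime(i)$, and once in the ambient space, of dimension $1+\frac{n(n+3)}{2}$, to bound $m$ — produces the data $\lambda_i,\mu_{ij},\alpha_i$ and contact directions $u_{ij}\in S^{n-1}\cap\partial K_{t_0}$ with $\sum_i\lambda_i\alpha_i=-1$, with $\sum_{i,j}\lambda_i\mu_{ij}u_{ij}=0$ (the genuinely new relation, forced by the vanishing $c$-component of $(-1,0,x_{I_n})$), and with $\sum_{i,j}\lambda_i\mu_{ij}\,u_{ij}\otimes u_{ij}=I_n$. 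Unwinding that each $(\alpha_i,\sum_j\mu_{ij}u_{ij},\sum_j\mu_{ij}x_{u_{ij}u_{ij}^t})$ lies in $N(E,\cdot)$ gives exactly the stated inequality, and evaluating it on $c=0$, $T=r(K_t)I_n$ and letting $t\to t_0^\pm$ gives the interval for $\alpha_i$.

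For the converse I would run the computation of Theorem~\ref{FunctionalJohn}: for any $(-\log t,c,x_T)\in E$, multiplying the inequality in~2 by $\lambda_i$ and summing over $i$, then using $\sum_i\lambda_i\alpha_i=-1$, $\langle c,\sum_{i,j}\lambda_i\mu_{ij}u_{ij}\rangle=0$, $\sum_{i,j}\lambda_i\mu_{ij}\langle Tu_{ij},u_{ij}\rangle=\textrm{tr}(T)$ and $\sum_{i,j}\lambda_i\mu_{ij}=n$, yields $\log t+\textrm{tr}(T)\leq n+\log t_0=\langle(-\log t_0,0,x_{I_n}),(-1,0,x_{I_n})\rangle$, so $(-1,0,x_{I_n})\in N(E,(-\log t_0,0,x_{I_n}))$; the supporting hyperplane of $C_1$ orthogonal to $(-1,0,x_{I_n})$ is then also supporting $E$, which forces $E\cap C_1=\{(-\log t_0,0,x_{I_n})\}$, i.e. $\mathcal{E}(f)=(B_2^n)^{t_0\Vert f\Vert_\infty}$. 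The main point to watch is purely bookkeeping: carrying the center $c$ through the normal-cone computations, the updated Caratheodory dimension counts $n\leq m^\prime(i)\leq\frac{n(n+3)}{2}$ and $1\leq m\leq 1+\frac{n(n+3)}{2}$, and the usual $t_0=1$ non-uniqueness of the supporting hyperplane of $C_1$, handled exactly as in the symmetric case; I do not expect any essentially new difficulty.
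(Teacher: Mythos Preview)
Your proposal is correct and follows essentially the same approach as the paper: the paper's proof consists precisely of the instruction to repeat the argument of Theorem~\ref{FunctionalJohn} with the enlarged sets $E,C_1\subseteq\R\times\R^n\times L$ that you wrote down, together with the identification of the contact point $(-\log t_0,0,x_{I_n})$, the normal vector $(-1,0,x_{I_n})$ to $C_1$ there, and the projected normal cone $P_{e_1^\perp}N(E,\cdot)=\{0\}\times\textrm{pos}\{(u,x_{uu^t}):u\in S^{n-1}\cap\partial K_{t_0}\}$. Your write-up actually supplies more detail than the paper does (the cylinder observation for $C_1$, the explicit Caratheodory counts, and the converse computation), but the route is identical.
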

\begin{proof}
The proof follows the same lines as that of Theorem \ref{FunctionalJohn}, but considering the convex sets in $\R\times\R^n\times L$
$$
E=\{(s,c,x_T)\in[0,+\infty]\times\R^{n}\times L\,:\,T\in C_{\textrm{spd}},(c+TB_2^n)^{e^{-s}\Vert f\Vert_\infty}\leq f\}
$$
and
$$
C_1=\{(s,c,x_T)\in[0,+\infty]\times\R^n\times L\,:\,T\in C_{\textrm{spd}}e^{-s}|T|\geq t_0\}.
$$
In this case the unique contact point between these sets will be $(-\log t_{0},0,x_{I_n})$, the normal vector to $C_1$ at it will be $(-1,0,x_{I_n})$, and the projection of the normal cone to $E$ at it onto $e_{1}^\perp$ will be
$$
P_{e_{1}^\perp}(N(E,(-\log t_0,0,x_{I_n}))=\{0\}\times\textrm{pos}\{(u,x_{uu^t})\,:\,u\in S^{n-1}\cap\partial K\}.
$$
\end{proof}

\begin{rmk}
From the proof of Theorem \ref{FunctionalJohn} and Theorem \ref{FunctionalJohnNonSymmetric} we deduce that if $\mathcal{E}(f)=\mathcal{E}^{t_0\Vert f\Vert_\infty}$, then necessarily $t_0\geq e^{-n}$. Indeed, assume that $\mathcal{E}(f)=(B_2^n)^{t_0\Vert f\Vert_\infty}$. Since $E$ is convex, for any $\lambda\in(0,1)$, $\lambda(-\log t_0,x_{I_n})\in E$, (or $\lambda(-\log t_0,0,x_{I_n})+(1-\lambda)(0,a,0)$ for some a in the non-symmetric case). Then
$$
\langle\lambda(-\log t_0,x_{I_n}),(-1,x_{I_n})\rangle\leq \langle(-\log t_0,x_{I_n}),(-1,x_{I_n})\rangle
$$
or, equivalently,
$$
\lambda\log t_0+\lambda n\leq\log t_0+n,
$$
which implies that $\log t_0+n\geq0$, which is equivalent to $t_0\geq e^{-n}$.
\end{rmk}
\begin{cor}\label{EqualityInEllipsoidsEquation}
Let $f$ be an integrable log-concave function such that $\max_{t\in(0,1]}\phi_{f}(t)=\phi_{f}(t_0)$, {\it i.e.}, its John's ellipsoid is $\mathcal{E}(f)=\mathcal{E}_{t_0}(f)^{t_0\Vert f\Vert_\infty}$. Then for every $t\in(0,1]$
$$
|\mathcal{E}_t(f)|\leq\left(1-\log\left(\frac{t}{t_0}\right)^\frac{1}{n}\right)^n|\mathcal{E}_{t_0}(f)|.
$$
Besides, if there is equality for every $t\in(0,1]$, then for some $c_t\in\R^n$
$$
\mathcal{E}_t(f)=c_t+\left(1-\log\left(\frac{t}{t_0}\right)^\frac{1}{n}\right)\mathcal{E}_{t_0}(f).
$$
\end{cor}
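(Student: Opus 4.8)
Here is the plan.

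The strategy is to reparametrise by $s=-\log t\in[0,\infty)$ and study the function
$$
g(s):=\left|\mathcal{E}_{e^{-s}}(f)\right|^{\frac1n},\qquad s\in[0,\infty),
$$
so that $g(s_0)=|\mathcal{E}_{t_0}(f)|^{\frac1n}$ with $s_0:=-\log t_0$, and $\phi_f(e^{-s})=\|f\|_\infty\,e^{-s}g(s)^n$. Two ingredients are available for free. First, the chain of inequalities \eqref{eq:Econvex2} in the proof of Lemma \ref{concavityPhi}, read with $t_i=e^{-s_i}$, says precisely that $g$ is concave on $[0,\infty)$. Second, by hypothesis $\phi_f$ is maximal at $t_0$, i.e. $s\mapsto\|f\|_\infty e^{-s}g(s)^n$ is maximal at $s_0$; and $g(s_0)>0$, since $\phi_f$ is positive somewhere (as $f$ is continuous on its support, $K_t(f)$ contains a fixed ball for all small $t$, whence $\phi_f(t)>0$), so its maximum $\phi_f(t_0)=\|f\|_\infty t_0|\mathcal{E}_{t_0}(f)|$ is positive.

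Next I would fix $t\in(0,1]$, write $s=-\log t$ and $x=s-s_0=\log(t_0/t)$, and interpolate between the two level sets: for $\lambda\in[0,1]$ put $t_\lambda:=t_0^{1-\lambda}t^\lambda$, so $-\log t_\lambda=(1-\lambda)s_0+\lambda s$. By \eqref{eq:Econvex} the ellipsoid $(1-\lambda)\mathcal{E}_{t_0}(f)+\lambda\mathcal{E}_t(f)$ lies in $K_{t_\lambda}(f)$, hence $|\mathcal{E}_{t_\lambda}(f)|^{\frac1n}\ge(1-\lambda)g(s_0)+\lambda g(s)$ (the first step of \eqref{eq:Econvex2}). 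Feeding this into $\phi_f(t_\lambda)\le\phi_f(t_0)$ and cancelling $\|f\|_\infty$ gives, with $r:=g(s)/g(s_0)\ge0$,
$$
e^{-\lambda x}\bigl((1-\lambda)g(s_0)+\lambda g(s)\bigr)^n\le g(s_0)^n,\qquad\text{equivalently}\qquad\bigl(1+\lambda(r-1)\bigr)^n\le e^{\lambda x}.
$$
For $\lambda\in(0,1]$ the base $1+\lambda(r-1)=1-\lambda+\lambda r$ is nonnegative, so taking $n$-th roots and rearranging yields $r-1\le\lambda^{-1}\bigl(e^{\lambda x/n}-1\bigr)$; letting $\lambda\to0^+$ gives $r-1\le x/n$, that is,
$$
|\mathcal{E}_t(f)|^{\frac1n}\le\Bigl(1-\tfrac1n\log\tfrac{t}{t_0}\Bigr)|\mathcal{E}_{t_0}(f)|^{\frac1n}.
$$
The left side is positive, so the factor on the right is positive, and raising to the $n$-th power produces the asserted inequality.

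For the equality part, assume equality holds for every $t\in(0,1]$; this is equivalent to $g(s)=g(s_0)\bigl(1+\tfrac1n(s-s_0)\bigr)$ for all $s\ge0$, i.e. $g$ is affine. Fix $t\in(0,1]$ with $t\ne t_0$ (and $K_t(f)$ of nonempty interior, the only possible exception being $t=1$, which is handled trivially since then the stated homothety ratio is $0$), and write $\mathcal{E}_{t_0}(f)=a_0+T_0B_2^n$, $\mathcal{E}_t(f)=a_1+T_1B_2^n$ with $T_0,T_1$ symmetric positive definite. Running \eqref{eq:Econvex2} for this pair of levels and any $\lambda\in(0,1)$ (with $t_\lambda=t_0^{1-\lambda}t^\lambda$), its outermost term $|\mathcal{E}_{t_\lambda}(f)|^{\frac1n}=g\bigl((1-\lambda)s_0+\lambda s\bigr)$ equals $(1-\lambda)g(s_0)+\lambda g(s)$ because $g$ is affine, and $(1-\lambda)g(s_0)+\lambda g(s)$ is exactly the third term of \eqref{eq:Econvex2}; hence the first two inequalities there are equalities. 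In particular Minkowski's determinant inequality is an equality, forcing $T_1=\sigma T_0$ for some $\sigma>0$. Thus $\mathcal{E}_t(f)=c_t+\sigma\mathcal{E}_{t_0}(f)$ with $c_t=a_1-\sigma a_0$, and comparing volumes gives $\sigma=|\mathcal{E}_t(f)|^{\frac1n}/|\mathcal{E}_{t_0}(f)|^{\frac1n}=1-\tfrac1n\log\tfrac{t}{t_0}$, which is the claimed identity (the case $t=t_0$ being trivial).

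The crux, as I see it, is the combination carried out in the second paragraph: maximality of $\phi_f$ at $t_0$ by itself only delivers the weaker bound $g(s)\le g(s_0)e^{(s-s_0)/n}$, and it is the concavity of $g$ — entering through the admissibility of the interpolant $(1-\lambda)\mathcal{E}_{t_0}(f)+\lambda\mathcal{E}_t(f)$ as a competitor ellipsoid at level $t_\lambda$ — that is needed to replace the exponential by its tangent line at $s_0$. After the limit $\lambda\to0^+$ is taken, the remaining steps are routine.
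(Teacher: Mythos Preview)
Your argument is correct, and it follows a genuinely different route from the paper's. The paper derives this corollary from its characterization of John's position (Theorem~\ref{FunctionalJohnNonSymmetric}): after reducing to $\mathcal{E}(f)=(B_2^n)^{t_0\Vert f\Vert_\infty}$, the contact--point decomposition there yields, for any $c+TB_2^n\subseteq K_t(f)$, the \emph{trace} inequality $\log(t/t_0)+\mathrm{tr}(T)\le n$, and then the arithmetic--geometric mean inequality $\mathrm{tr}(T)\ge n|T|^{1/n}$ produces the volume bound; equality forces $\mathrm{tr}(T)=n|T|^{1/n}$, so $T$ is a multiple of the identity and $\mathcal{E}_t(f)$ is a ball. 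You bypass Theorem~\ref{FunctionalJohnNonSymmetric} entirely and work only with the interpolation estimate \eqref{eq:Econvex}--\eqref{eq:Econvex2} from Lemma~\ref{concavityPhi} together with the first--order optimality of $\phi_f$ at $t_0$: plugging the interpolated ellipsoid into $\phi_f(t_\lambda)\le\phi_f(t_0)$ and letting $\lambda\to0^+$ replaces the exponential bound $g(s)\le g(s_0)e^{(s-s_0)/n}$ by its tangent line, which is exactly the claim. Your equality analysis then reads off the homothety directly from the equality case of Minkowski's determinant inequality in \eqref{eq:Econvex2}.

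What each approach buys: the paper's proof gives a genuinely stronger intermediate statement---a trace bound valid for \emph{every} ellipsoid in $K_t(f)$, not just its John ellipsoid---and the equality case falls out in one line once in John's position. Your argument is more self--contained and elementary, needing nothing beyond Lemma~\ref{concavityPhi}; as a bonus, the inequality $0\le r\le 1+x/n$ at $s=0$ recovers $t_0\ge e^{-n}$ without the separation argument used in the paper's remark. One cosmetic point: your parenthetical about $t=1$ should say that \emph{if} $K_1(f)$ has empty interior then, under the standing equality assumption, $g(0)=0$ forces $t_0=e^{-n}$ and hence the homothety ratio $1+\tfrac{1}{n}\log t_0$ vanishes; it is not that $t=1$ automatically gives ratio $0$.
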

\begin{proof}
By Lemma \ref{LinearMapsAndEllipsoids} we can assume that $\mathcal{E}(f)=(B_2^n)^{t_0\Vert f\Vert_\infty}$. Then by Theorem \ref{FunctionalJohnNonSymmetric} there exist
 \begin{itemize}
 \item  $\{u_{ij}\}_{j=1}\subseteq \partial K_{t_0}\cap S^{n-1}$, $1\leq i\leq m$, $n\leq j\leq  m^\prime(i)$
 \item  $\{\lambda_i\}_{i=1}^m$, $\{\mu_{ij}\}_{j=1}^{m^\prime(i)}$, with $\lambda_i,\mu_{ij}>0$ and
 \item  $\{\alpha_i\}_{i=1}^m$, with $\alpha_i\in\left[t_0\left.\frac{d}{dt^+}r(K_t)\right|_{t=t_0},t_0\left.\frac{d}{dt^-}r(K_t)\right|_{t=t_0}\right]$
 \end{itemize}
 with $1\leq m\leq 1+\frac{n(n+3)}{2}$ and $1\leq m^\prime(i)\leq \frac{n(n+3)}{2}$ for any $1\leq i\leq m$, such that
$$
\sum_{i=1}^m\lambda_i\alpha_i=-1,
$$
$$
\sum_{i=1}^m\sum_{j=1}^{m^\prime(i)}\lambda_i\mu_{ij}u_{ij}=0
$$
$$
I_{n}=\sum_{i=1}^m\sum_{j=1}^{m^\prime(i)}\lambda_i\mu_{ij} u_{ij}\otimes u_{ij},
$$
and for any $t\in (0,1]$, and any $1\leq i\leq m$ if $c_t+TB_2^n\subseteq K_t(f)$
$$
-\alpha_i\log\left(\frac{t}{t_0}\right)+\sum_{j=1}^{m^\prime(i)}\mu_{ij}\langle c_t,u_{ij}\rangle+\sum_{j=1}^{m^\prime(i)}\mu_{ij}\langle Tu_{ij},u_{ij}\rangle\leq \sum_{j=1}^{m^\prime(i)}\mu_{ij}.
$$
Multiplying the last inequality by $\lambda_i$ and summing in $i$ we obtain that for any $t\in (0,1]$ if $c_t+TB_2^n\subseteq K_t(f)$
$$
\log\left(\frac{t}{t_0}\right)+\textrm{tr} (T)\leq n
$$
and so
$$
\log\left(\frac{t}{t_0}\right)+n|T|^\frac{1}{n}\leq n.
$$
Thus, if $\mathcal{E}\subseteq K_t(f)$ then
$$
|\mathcal{E}|^\frac{1}{n}\leq\left(1-\log\left(\frac{t}{t_0}\right)^\frac{1}{n}\right)|\mathcal{E}_{t_0}(f)|^\frac{1}{n}.
$$
and so it happens for the John's ellipsoid of $K_t(f)$, $\mathcal{E}_t(f)$. Besides, if there is equality in this inequality there has to be equality in all the inequalities. Then $\mathcal{E}_t(f)=c_t+T_tB_2^n$ verifies that $tr(T)=n|T|^\frac{1}{n}$ and so it has to be a Euclidean ball. Thus
$$
\mathcal{E}_t(f)=c_t+\left(1-\log\left(\frac{t}{t_0}\right)^\frac{1}{n}\right)\mathcal{E}_{t_0}(f).
$$
\end{proof}
The maximizers of the integral ratio will be log-concave functions like the ones defined in the following lemma. Let us study some of their properties
\begin{lemma}\label{maximizers}
For any $t_0\geq e^{-n}$ and convex body $K\subseteq\R^n$ with $0\in K$, let
$$
f_{K,t_0}(x)=e^{-\max\{\Vert x\Vert_K-(n+\log t_0),0\}}.
$$
Then
\begin{itemize}
\item $K_t(f_{K,t_0})=\left(1-\log\left(\frac{t}{t_0}\right)^\frac{1}{n}\right)K_{t_0}(f_{K,t_0})$
\item $\mathcal{E}_t(f_{K,t_0})=\left(1-\log\left(\frac{t}{t_0}\right)^\frac{1}{n}\right)\mathcal{E}_{t_0}(f_{K,t_0})$
\item $\max_{t\in(0,1]}\phi_{f_{K,t_0}}(t)=\phi_{f_{K,t_0}}(t_0)$
\item $\displaystyle{I.rat(f_{K,t_0})=\frac{v.rat(K)}{t_0^\frac{1}{n}}\left(\int_0^1\left(1-\log\left(\frac{t}{t_0}\right)^\frac{1}{n}\right)^ndt\right)^\frac{1}{n}}$
\item $I.rat(f_{K,t_0})$ is decreasing in $t_0$ in the interval $[e^{-n},1]$.
\end{itemize}
\end{lemma}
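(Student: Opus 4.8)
The plan is to verify the five items in the order listed, each of which reduces to a direct computation once we record two elementary facts: $\Vert f_{K,t_0}\Vert_\infty=1$ (indeed $f_{K,t_0}\le 1$ everywhere and $f_{K,t_0}(0)=1$, using that $n+\log t_0\ge 0$, which holds because $t_0\ge e^{-n}$), and $f_{K,t_0}$ is an integrable log-concave function (since $x\mapsto\max\{\Vert x\Vert_K-(n+\log t_0),0\}$ is convex and $f_{K,t_0}$ equals $1$ on $(n+\log t_0)K$ and decays exponentially outside it). With this in hand, the first step is to compute the superlevel sets. Because $\Vert f_{K,t_0}\Vert_\infty=1$, for $t\in(0,1]$ we have $K_t(f_{K,t_0})=\{x\in\R^n:\max\{\Vert x\Vert_K-(n+\log t_0),0\}\le\log(1/t)\}$, and a short case analysis ($t=1$ versus $t<1$) shows this equals $\{x:\Vert x\Vert_K\le n+\log(t_0/t)\}=(n+\log(t_0/t))K$ in both cases (the dilation factor being nonnegative for $t\le 1$, $t_0\ge e^{-n}$). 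Taking $t=t_0$ gives $K_{t_0}(f_{K,t_0})=nK$, and rewriting $n+\log(t_0/t)=n\bigl(1-\log(\tfrac{t}{t_0})^{1/n}\bigr)$ yields the first item. The second item is immediate from the first together with the covariance of the John ellipsoid of a convex body under invertible affine maps, hence under dilations: $\mathcal{E}_t(f_{K,t_0})=\mathcal{E}\bigl((1-\log(\tfrac{t}{t_0})^{1/n})K_{t_0}(f_{K,t_0})\bigr)=(1-\log(\tfrac{t}{t_0})^{1/n})\mathcal{E}_{t_0}(f_{K,t_0})$.

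Next, writing $\lambda_t=1-\log(\tfrac{t}{t_0})^{1/n}$, the first two items give $\phi_{f_{K,t_0}}(t)=t\,\lambda_t^{\,n}\,|\mathcal{E}_{t_0}(f_{K,t_0})|$. To locate its maximum I would substitute $u=\log(t/t_0)$, which ranges over $(-\infty,\log(1/t_0)]$, an interval containing $0$ since $t_0\le 1$; this reduces the problem to maximizing $h(u)=e^u(1-u/n)^n$ over that set. Since $h'(u)=-\tfrac{u}{n}e^u(1-u/n)^{n-1}$ is positive for $u<0$ and negative for $0<u<n$, the maximum is attained at $u=0$, i.e.\ at $t=t_0$, and it equals $\phi_{f_{K,t_0}}(t_0)=t_0|\mathcal{E}_{t_0}(f_{K,t_0})|$, which is the third item.

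For the fourth item I would combine the layer-cake formula $\int_{\R^n}f_{K,t_0}=\int_0^1|K_t(f_{K,t_0})|\,dt=n^n|K|\int_0^1\lambda_t^{\,n}\,dt$ (using the first item) with $|\mathcal{E}_{t_0}(f_{K,t_0})|=|n\,\mathcal{E}(K)|=n^n|K|/v.rat(K)^n$; dividing, the common factor $n^n|K|$ cancels and one obtains $I.rat(f_{K,t_0})^n=\frac{v.rat(K)^n}{t_0}\int_0^1\lambda_t^{\,n}\,dt$. Finally, for the fifth item I would write $I.rat(f_{K,t_0})=v.rat(K)\,\Psi(t_0)^{1/n}$ with $\Psi(t_0)=\frac1{t_0}\int_0^1\lambda_t^{\,n}\,dt$, perform the change of variable $t=t_0r$ to get $\Psi(t_0)=\int_0^{1/t_0}(1-\tfrac1n\log r)^n\,dr$, and then differentiate: $\Psi'(t_0)=-t_0^{-2}(1+\tfrac1n\log t_0)^n\le 0$ for $t_0\in[e^{-n},1]$, so $I.rat(f_{K,t_0})$ is decreasing on that interval.

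The only point that requires some care is the maximization of $h$ in the third item: one must check that the interior critical point $u=0$ lies in the admissible range (it does, precisely because $t_0\le 1$) and that no competing supremum arises as $u\to-\infty$ (it does not, since $e^u$ decays faster than $(1-u/n)^n$ grows). The degenerate corner case $t_0=e^{-n}$, $t=1$, where $\lambda_1=0$ and $K_1(f_{K,t_0})=\{0\}$, causes no trouble since then $\phi_{f_{K,t_0}}(1)=0$. All the remaining manipulations are routine.
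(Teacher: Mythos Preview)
Your proof is correct and follows essentially the same direct-computation approach as the paper: identify $K_t(f_{K,t_0})=(n+\log(t_0/t))K$, read off the relations between $K_t$, $\mathcal{E}_t$, and $\phi$, then use the layer-cake formula and a change of variable for the last two items. The only cosmetic differences are your choice of substitution $u=\log(t/t_0)$ in the third item (the paper instead notes that $g(x)=x(1-\log x)$ is maximized at $x=1$, which is the same computation after setting $x=(t/t_0)^{1/n}$) and $t=t_0r$ in the fifth (the paper uses $t=t_0e^{-s}$); your version also adds some welcome justification of edge cases that the paper leaves implicit.
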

\begin{proof}
Notice that $\Vert f_{K,t_0}\Vert_\infty=1$. Then, by definition of $K_t(f_{K,t_0})$
\begin{eqnarray*}
K_t(f_{K,t_0})&=&\{x\in\R^n\,:\,\max\{\Vert x\Vert_K-(n+\log t_0),0\}\leq-\log t\}\cr
&=&\left(n-\log\left(\frac{t}{t_0}\right)\right)K=n\left(1-\log\left(\frac{t}{t_0}\right)^\frac{1}{n}\right)K
\end{eqnarray*}
Consequently, for any $t\in(0,1]$
$$
K_t(f_{K,t_0})=\left(1-\log\left(\frac{t}{t_0}\right)^\frac{1}{n}\right)K_{t_0}(f_{K,t_0}).
$$
Then
$$
\mathcal{E}_t(f_{K,t_0})=\left(1-\log\left(\frac{t}{t_0}\right)^\frac{1}{n}\right)\mathcal{E}_{t_0}(f_{K,t_0})
$$
and
$$
\phi_{f_{K,t_0}}(t)=\frac{t}{t_0}\left(1-\log\left(\frac{t}{t_0}\right)^\frac{1}{n}\right)^n\phi_{f_{K,t_0}}(t_0).
$$
Since the function $g(x)=x(1-\log x)$ attains its maximum at $x=1$, $\phi_{f_{K,t_0}}(t)$ attains its maximum at $t=t_0$. Consequently
\begin{eqnarray*}
I.rat(f_{K,t_0})^n&=&\frac{1}{t_0|\mathcal{E}_{t_0}(f_{K,t_0})|}{\int_{\R^n}f_{K,t_0}(x)dx}\cr
&=&\frac{1}{t_0|\mathcal{E}_{t_0}(f_{K,t_0})|}{\int_0^1|K_t(f_{K,t_0})|dt}\cr
&=&\frac{|K_{t_0}(f_{K,t_0})|}{t_0|\mathcal{E}_{t_0}(f_{K,t_0})|}\int_0^1\left(1-\log\left(\frac{t}{t_0}\right)^\frac{1}{n}\right)^ndt\cr
&=&\frac{v.rat(K)^n}{t_0}\int_0^1\left(1-\log\left(\frac{t}{t_0}\right)^\frac{1}{n}\right)^ndt.
\end{eqnarray*}
Changing variables $t=t_0e^{-s}$ we have
$$
I.rat(f_{K,t_0})^n=v.rat(K)\int_{\log t_0}^{+\infty}\left(1+\frac{1}{n}s\right)^ne^{-s}ds,
$$
which is clearly decreasing in $t_0\in[e^{-n},1]$.
\end{proof}
Now, we have the following, which in particular, since $I.rat(f_{B_\infty^n,t_0})$ and $I.rat(f_{\Delta^n,t_0})$ decrease in $t_0$, implies Theorem \ref{TheoremMaximizer}.
\begin{thm}
Let $t_0\in(0,1]$ and let $f:\R^n\to\R$ be an integrable log-concave  such that $\max_{t\in(0,1]}\phi_f(t)=\phi_f(t_0)$, {\it i.e.}, its John's ellipsoid is $\mathcal{E}(f)=\mathcal{E}_{t_0}(f)^{t_0\Vert f\Vert_\infty}$. Then we have that
$$
I.rat(f)\leq I.rat(f_{\Delta^n,t_0})
$$
with equality if and only if $\frac{f}{\Vert f\Vert_\infty}=f_{\Delta_n-c,t_0}\circ T$ for some affine map $T$ and some $c\in \Delta^n$.
If $f$ is even
$$
I.rat(f)\leq I.rat(f_{B_\infty^n,t_0})
$$
with equality if and only if $\frac{f}{\Vert f\Vert_\infty}=f_{B_\infty^n,t_0}\circ T$ for some $T\in GL(n)$.
\end{thm}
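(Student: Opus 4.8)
The plan is to reduce everything to two estimates that are already available: the classical reverse volume‑ratio inequality, which controls the \emph{shape} of each superlevel set $K_t(f)$, and Corollary~\ref{EqualityInEllipsoidsEquation}, which controls how fast the John ellipsoid $\mathcal{E}_t(f)$ shrinks as $t$ moves away from $t_0$. First I would use Lemma~\ref{LinearMapsAndEllipsoids} to assume $f$ is in John's position with $\mathcal{E}_{t_0}(f)=B_2^n$, and rescale so that $\Vert f\Vert_\infty=1$; then the layer–cake formula gives $I.rat(f)^n=\frac{1}{t_0|B_2^n|}\int_0^1|K_t(f)|\,dt$. For each $t\in(0,1]$, the fact that the regular simplex (respectively the cube, when $f$ is even) maximizes the volume ratio among all (respectively all symmetric) convex bodies yields $|K_t(f)|\le v.rat(\Delta^n)^n|\mathcal{E}_t(f)|$, while Corollary~\ref{EqualityInEllipsoidsEquation} gives $|\mathcal{E}_t(f)|\le\big(1-\log(t/t_0)^{1/n}\big)^n|B_2^n|$. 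Multiplying these, integrating over $t\in(0,1)$, and comparing with the closed‑form value of $I.rat(f_{\Delta^n,t_0})^n$ from Lemma~\ref{maximizers} would give $I.rat(f)\le I.rat(f_{\Delta^n,t_0})$, and likewise $I.rat(f)\le I.rat(f_{B_\infty^n,t_0})$ in the even case.

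For the equality characterization, the ``if'' direction is short: $I.rat$ and $v.rat$ are affine invariant, $v.rat(\Delta^n-c)=v.rat(\Delta^n)$, and by Lemma~\ref{maximizers} the John ellipsoid of $f_{\Delta^n-c,t_0}$ sits at level $t_0$ and $I.rat(f_{\Delta^n-c,t_0})$ depends on the body only through its volume ratio, so $I.rat(f_{\Delta^n-c,t_0}\circ T)=I.rat(f_{\Delta^n,t_0})$; similarly for the cube.

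For the converse, assume $I.rat(f)=I.rat(f_{\Delta^n,t_0})$. Then both inequalities above are equalities for a.e.\ $t$, and since $t\mapsto|K_t(f)|$ and $t\mapsto|\mathcal{E}_t(f)|$ are continuous (as in the proof of Theorem~\ref{ExistenceUniquenessEllipsoid}) they are equalities for every $t\in(0,1]$ for which $K_t(f)$ is full‑dimensional. The equality case of the reverse volume‑ratio inequality then forces each such $K_t(f)$ to be a simplex, and the equality clause of Corollary~\ref{EqualityInEllipsoidsEquation} forces $\mathcal{E}_t(f)=c_t+\rho_tB_2^n$ with $\rho_t:=1-\log(t/t_0)^{1/n}$, so each $K_t(f)$ is a regular simplex (up to a rotation) with John ball of radius $\rho_t$ centered at $c_t$, and $|K_t(f)|^{1/n}$ is an affine function of $\log t$. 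The remaining point — and the step I expect to be the main obstacle — is to show that all these simplices are homothetic with a common orientation, ruling out families of mutually rotated regular simplices. For this I would take $t_1<t_0$, $\lambda\in(0,1)$ and $t=t_0^{1-\lambda}t_1^\lambda$, and combine the log‑concavity inclusion from the beginning of the proof of Lemma~\ref{concavityPhi}, $K_t(f)\supseteq(1-\lambda)K_{t_0}(f)+\lambda K_{t_1}(f)$, with the identity $\rho_t=(1-\lambda)\rho_{t_0}+\lambda\rho_{t_1}$ and hence $|K_t(f)|^{1/n}=(1-\lambda)|K_{t_0}(f)|^{1/n}+\lambda|K_{t_1}(f)|^{1/n}$; chaining these with monotonicity of volume and Brunn--Minkowski forces equality in Brunn--Minkowski, so $K_{t_1}(f)$ is homothetic to $K_{t_0}(f)$, and tracking the homothety ratios and centers (both affine in $\log t$ and normalized at $t_0$) gives $K_t(f)=\rho_tK_{t_0}(f)+(\rho_t-1)z$ for a fixed $z\in\R^n$.

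Finally, after the translation $x\mapsto x-z$ the sets $K_t(f)$ become the nested dilates $\rho_t\big(K_{t_0}(f)+z\big)$ of the single simplex $K_{t_0}(f)+z$, which must contain the origin precisely because the dilates are nested; then $\frac{1}{\Vert f\Vert_\infty}f(\cdot-z)$ has exactly the superlevel sets of $f_{\frac1n(K_{t_0}(f)+z),\,t_0}$, and writing the simplex $\frac1n(K_{t_0}(f)+z)$ as $A(\Delta^n-c)$ with $A\in GL(n)$ and $c\in\Delta^n$ (possible since $0$ lies in it) identifies $\frac{f}{\Vert f\Vert_\infty}$ with $f_{\Delta^n-c,t_0}\circ T$ for some affine $T$. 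In the even case the same argument applies with the cube in place of the simplex; symmetry of all $K_t(f)$ forces $c_t\equiv0$ and $z=0$, so no translation is needed and one obtains $\frac{f}{\Vert f\Vert_\infty}=f_{B_\infty^n,t_0}\circ T$ for some $T\in GL(n)$.
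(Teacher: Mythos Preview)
Your proof of the inequality is exactly the paper's: rewrite $I.rat(f)^n$ via the layer--cake formula, bound $|K_t(f)|\le v.rat(\Delta^n)^n|\mathcal{E}_t(f)|$ (resp.\ with $B_\infty^n$) levelwise, then bound $|\mathcal{E}_t(f)|$ by Corollary~\ref{EqualityInEllipsoidsEquation}, and identify the result with $I.rat(f_{\Delta^n,t_0})^n$ from Lemma~\ref{maximizers}.

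For the equality characterization your route diverges from the paper's, and in a useful way. The paper, after observing that each $K_t(f)$ is an affine simplex and that $\mathcal{E}_t(f)=c_t+\rho_t\,\mathcal{E}_{t_0}(f)$, simply writes ``and so $T_t=c_t+\rho_t T$'', i.e.\ it asserts that the level simplices are homothetic because their John ellipsoids are; but a regular simplex with a prescribed inscribed ball is determined only up to rotation, so this step is not justified as written. The paper then determines the centers $c_t$ by a direct log-concavity inclusion argument. You instead feed the log-concavity inclusion $K_{t_0^{1-\lambda}t_1^\lambda}\supseteq(1-\lambda)K_{t_0}+\lambda K_{t_1}$ together with the affine dependence of $|K_t|^{1/n}$ on $\log t$ into Brunn--Minkowski; the resulting equality case forces $K_{t_1}$ to be homothetic to $K_{t_0}$ and simultaneously pins down the centers as affine in $\log t$. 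This closes precisely the rotational gap the paper skips, at the cost of a slightly longer argument. One small point: your Brunn--Minkowski step as stated only treats $t<t_0$; to cover $t>t_0$ you should also run it with $t_0$ as the interior point (write $t_0=t_1^{1-\lambda}t_2^\lambda$ with $t_1<t_0<t_2$), which goes through verbatim since $\rho_{t_0}=(1-\lambda)\rho_{t_1}+\lambda\rho_{t_2}$.
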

\begin{proof}
Let  $f:\R^n\to\R$ be such that $\max_{t\in(0,1]}\phi_f(t)=\phi_f(t_0)$. Then
\begin{eqnarray*}
I.rat(f)^n&=&\frac{1}{t_0\Vert f\Vert_\infty|\mathcal{E}_f(t_0)|}{\int_{\R^n}f(x)dx}
\cr
&=&\frac{1}{t_0|\mathcal{E}_f(t_0)|}{\int_0^1|K_t(f)|dt}\cr
&=&\frac{1}{t_0|\mathcal{E}_f(t_0)|}{\int_0^1v.rat(K_t)^n|\mathcal{E}_f(t)|dt}
\cr
&\leq&\frac{v.rat(\Delta^n)^n}{t_0|\mathcal{E}_f(t_0)|}{\int_0^1|\mathcal{E}_f(t)|dt}\cr
&\leq&\frac{v.rat(\Delta^n)^n}{t_0}\int_0^1\left(1-\log\left(\frac{t}{t_0}\right)^\frac{1}{n}\right)^ndt\cr
&=&I.rat(f_{\Delta^n,t_0})^n.
\end{eqnarray*}
Besides, if there is equality, all the inequalities are equalities and so $v.rat(K_t)=v.rat(\Delta^n)$, which implies that $K_t=T_t\Delta^n$, for some affine map $T_t$ and
$|\mathcal{E}_f(t)|=\left(1-\log\left(\frac{t}{t_0}\right)^\frac{1}{n}\right)^n|\mathcal{E}_f(t_0)|$, which by Corollary \ref{EqualityInEllipsoidsEquation} implies that the John's ellipsoid of every level set $\mathcal{E}_f(t)=c_t+\left(1-\log\left(\frac{t}{t_0}\right)^\frac{1}{n}\right)\mathcal{E}_f(t_0)$ and so $T_t=c_t+\left(1-\log\left(\frac{t}{t_0}\right)^\frac{1}{n}\right)T$ for every $t\in(0,1]$. Thus, we have that $$K_t=c_t+\left(1-\log\left(\frac{t}{t_0}\right)^\frac{1}{n}\right)T\Delta^n.$$

By Lemma \ref{LinearMapsAndEllipsoids} we can assume without loss of generality that $K_{t_0}=n\Delta^n$. In such case $c_{t_0}=0$. Then, calling $t=e^{-s}$ and $t_0=e^{-s_0}$ we have that
$$
K_{e^{-s}}=c_{e^{-s}}+\left(1+\frac{s}{n}-\frac{s_0}{n}\right)K_{e^{-s_0}}.
$$

By log-concavity, we have that for every $s\in[0,s_0]$
\begin{eqnarray*}
K_{e^{-s}}&\supseteq&\frac{s}{s_0}K_{e^{-s_0}}+\left(1-\frac{s}{s_0}\right)K_1\cr
&=&\left(1-\frac{s}{s_0}\right)c_1+\left(1+\frac{s}{n}-\frac{s_0}{n}\right)K_{e^{-s_0}}
\end{eqnarray*}
and then $c_{e^{-s}}=\left(1-\frac{s}{s_0}\right)c_1$. If $s\geq s_0$ we have that
\begin{eqnarray*}
K_{e^{-s_0}}&\supseteq&\frac{s_0}{s}K_{e^{-s}}+\left(1-\frac{s_0}{s}\right)K_1\cr
&=&\frac{s_0}{s}c_{e^{-s}}+\left(1-\frac{s_0}{s}\right)c_1+K_{e^{-s_0}}
\end{eqnarray*}
and also in this case $c_{e^{-s}}=\left(1-\frac{s}{s_0}\right)c_1$. Thus, for any $s\geq 0$
$$
K_{e^{-s}}=\left(1-\frac{s}{s_0}\right)c_1+\left(1+\frac{s}{n}-\frac{s_0}{n}\right)K_{e^{-s_0}}.
$$

Consequently, $\frac{f}{\Vert f\Vert_\infty}=e^{-v(\cdot)}\circ T$ with $T$ an affine map and
\begin{eqnarray*}
v(x)&=&\inf\left\{s\,:\,x\in K_{e^{-s}}\right\}\cr
&=&\inf\left\{s\,:\,x\in \left(1-\frac{s}{s_0}\right)c_1+\left(1+\frac{s}{n}-\frac{s_0}{n}\right)K_{e^{-s_0}}\right\}\cr
&=&\inf\left\{s\,:\,x\in\frac{n}{s_0}c_1+(s+n-s_0)\left(\frac{K_{e^{-s_0}}}{n}-\frac{1}{s_0}c_1\right)\right\}\cr
&=&\inf\left\{s\,:\,x-\frac{n}{s_0}c_1\in+(s+n-s_0)\left(\frac{K_{e^{-s_0}}}{n}-\frac{1}{s_0}c_1\right)\right\}\cr
&=&\max\left\{\left\Vert x-\frac{n}{s_0}c_1\right\Vert_{\left(\frac{1}{n}{K_{e^{-s_0}}}-\frac{1}{s_0}c_1\right)}-(n-s_0),0\right\}\cr
&=&\max\left\{\left\Vert x+\frac{n}{\log t_0}c_1\right\Vert_{\left(\frac{1}{n}{K_{t_0}}+\frac{1}{\log t_0}c_1\right)}-(n+\log t_0),0\right\}\cr
&=&\max\left\{\left\Vert x+\frac{n}{\log t_0}c_1\right\Vert_{\left(\Delta^n+\frac{1}{\log t_0}c_1\right)}-(n+\log t_0),0\right\}.\cr
\end{eqnarray*}
Notice that for $v$ to be well defined necessarily $c=\frac{1}{-\log t_0}c_1\in\Delta^n$ and then there exists $c\in\Delta^n$ such that
$$
\frac{f}{\Vert f\Vert_\infty}=e^{-\max\left\{\Vert \cdot-nc\Vert_{\left(\Delta^n-c\right)}-(n+\log t_0),0\right\}}\circ T
$$
or, equivalently,
$$
\frac{f}{\Vert f\Vert_\infty}=e^{-\max\left\{\Vert \cdot\Vert_{\left(\Delta^n-c\right)}-(n+\log t_0),0\right\}}\circ T
$$
The same proof works in the even case. In the even case we know that $c_t=0$ for any $t$ and then $T_t=\left(1-\log\left(\frac{t}{t_0}\right)^\frac{1}{n}\right)T$. Thus, we can assume  without loss of generality that $K_{t_0}=nB_\infty^n$ and then it implies that
$\frac{f}{\Vert f\Vert_\infty}=f_{B_\infty^n,t_0}\circ T$.
\end{proof}

Finally, we will compute the integral ratio of this maximizing function in the following lemma. We will do it for a whole class of functions that include the maximizing one.

\begin{lemma}
Let $\alpha\geq1$ and $f(x)=e^{-\Vert x\Vert_K^\alpha}$. Then
$$
I.rat(f)=\left(\frac{e\alpha\Gamma\left(1+\frac{n}{\alpha}\right)^{\frac{\alpha}{n}}}{n}\right)^{\frac{1}{\alpha}}v.rat(K)\sim v.rat(K).
$$
\end{lemma}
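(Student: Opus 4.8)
The plan is to reduce the statement to a description of the level sets of $f$ and to two elementary integrals. First I would record that $\Vert f\Vert_\infty=f(0)=1$ and that, for $t\in(0,1]$,
\[
K_t(f)=\{x\in\R^n:\Vert x\Vert_K^\alpha\le-\log t\}=(-\log t)^{1/\alpha}K.
\]
Applying Lemma \ref{LinearMapsAndEllipsoids} to the dilation $x\mapsto(-\log t)^{1/\alpha}x$ gives $\mathcal{E}_t(f)=(-\log t)^{1/\alpha}\mathcal{E}(K)$, where $\mathcal{E}(K)$ is the classical John ellipsoid of $K$, so that $\phi_f(t)=t(-\log t)^{n/\alpha}|\mathcal{E}(K)|$. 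Writing $t=e^{-s}$ turns this into $e^{-s}s^{n/\alpha}|\mathcal{E}(K)|$, which a one‑variable computation maximizes at $s=n/\alpha$, i.e.\ at $t_0=e^{-n/\alpha}\in(0,1]$. By Theorem \ref{ExistenceUniquenessEllipsoid} this $t_0$ is exactly the one realizing the John ellipsoid of $f$, whence $\int_{\R^n}\mathcal{E}(f)(x)\,dx=\max_{t\in(0,1]}\phi_f(t)=e^{-n/\alpha}(n/\alpha)^{n/\alpha}|\mathcal{E}(K)|$. (The hypothesis $\alpha\ge1$ is what makes $f$ log‑concave; it plays no role in $t_0\le1$.)

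For the numerator, the layer‑cake formula together with the same description of the level sets gives
\[
\int_{\R^n}f(x)\,dx=\int_0^1|K_t(f)|\,dt=|K|\int_0^1(-\log t)^{n/\alpha}\,dt=|K|\,\Gamma\!\left(1+\tfrac{n}{\alpha}\right),
\]
the last equality by the substitution $t=e^{-s}$. Dividing, taking $n$-th roots, and using $v.rat(K)=(|K|/|\mathcal{E}(K)|)^{1/n}$, I obtain
\[
I.rat(f)=v.rat(K)\left(\frac{e\alpha}{n}\right)^{1/\alpha}\Gamma\!\left(1+\tfrac{n}{\alpha}\right)^{1/n}=\left(\frac{e\alpha\,\Gamma(1+\frac{n}{\alpha})^{\alpha/n}}{n}\right)^{1/\alpha}v.rat(K),
\]
which is the asserted identity after regrouping the powers (note $\Gamma(1+\tfrac n\alpha)^{1/n}=[\Gamma(1+\tfrac n\alpha)^{\alpha/n}]^{1/\alpha}$).

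For the asymptotic equivalence I would invoke Stirling's formula: with $m=n/\alpha$ one has $\Gamma(1+m)^{1/m}=(m/e)(1+o(1))$, so the multiplicative constant equals $\big(\tfrac{e}{m}\,\Gamma(1+m)^{1/m}\big)^{1/\alpha}=(1+o(1))^{1/\alpha}\to1$ as $n\to\infty$, i.e.\ $I.rat(f)\sim v.rat(K)$; for $\alpha=1$ this is the familiar $e\,(n!)^{1/n}/n\to1$. There is no genuine obstacle in the argument: the only two points that need attention are (i) that the critical value $t_0=e^{-n/\alpha}$ is really where the John ellipsoid of $f$ is attained, which is immediate from the uniqueness of the maximizer of $\phi_f$ in Theorem \ref{ExistenceUniquenessEllipsoid}, and (ii) the purely algebraic bookkeeping of exponents when passing from $I.rat(f)^n$ to the stated closed form.
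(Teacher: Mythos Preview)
Your argument is correct and follows essentially the same route as the paper: identify $K_t(f)=(-\log t)^{1/\alpha}K$, deduce $\phi_f(t)=t(-\log t)^{n/\alpha}|\mathcal{E}(K)|$, maximize at $t_0=e^{-n/\alpha}$, and compute $\int f=|K|\Gamma(1+n/\alpha)$ via the substitution $t=e^{-s}$. The only addition is that you actually justify the asymptotic $\sim v.rat(K)$ via Stirling, which the paper states without proof.
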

\begin{proof}
On one hand
\begin{eqnarray*}
\int_{\R^n}e^{-\Vert x\Vert_K^\alpha}dx&=&\int_{\R^n}\int_{\Vert x\Vert_K^\alpha}^{+\infty} e^{-t}dtdx=\int_0^{+\infty}\int_{t^\frac{1}{\alpha}K}e^{-t}dxdt\cr
&=&|K|\int_0^{+\infty} t^\frac{n}{\alpha}e^{-t}dt=|K|\Gamma\left(1+\frac{n}{\alpha}\right).
\end{eqnarray*}
On the other hand, for any $t\in(0,1]$
$$
K_t=(-\log t)^\frac{1}{\alpha}K
$$
and then,
$$
\mathcal{E}_t(f)=(-\log t)^\frac{1}{\alpha}\mathcal{E}(K),
$$
where $\mathcal{E}(K)$ is the John ellipsoid of $K$. Thus,
$$
\phi_f(t)=t(-\log t)^\frac{n}{\alpha}|\mathcal{E}(K)|.
$$
Let us find $\max_{t\in(0,1]}t(-\log t)^\frac{n}{\alpha}|\mathcal{E}(K)|=\max_{s\in[0,+\infty)}e^{-s}s^\frac{n}{\alpha}|\mathcal{E}(K)|$. Taking derivatives we obtain that this maximum is attained at $s=\frac{n}{\alpha}$ and so
$$
\max_{t\in(0,1]}t(-\log t)^\frac{n}{\alpha}|\mathcal{E}(K)|=\left(\frac{n}{\alpha}\right)^\frac{n}{\alpha}e^{-\frac{n}{\alpha}}|\mathcal{E}(K)|.
$$
Consequently
$$
I.rat(f)=\left(\frac{e\alpha\Gamma\left(1+\frac{n}{\alpha}\right)^{\frac{\alpha}{n}}}{n}\right)^{\frac{1}{\alpha}}v.rat(K).
$$
\end{proof}
\section{Reverse Sobolev-type inequalities}\label{ReverseSobolev}
In this section we will prove Theorem \ref{FunctionalReversePetty}. First we will define the polar projection body of a function
\begin{proposition}
Let $f:\R^n\to[0,+\infty)$ be a log-concave integrable function. If the following quantity is finite for every $x\in\R^n$ then it defines a norm
$$
\Vert x\Vert=2|x|\int_{x^\perp}\max_{s\in\R}f\left(y+s\frac{x}{|x|}\right)dy.
$$
Besides, if $f\in W^{1,1}(\R^n)$ this norm equals
$$
\Vert x\Vert=\int_{\R^n}|\langle\nabla f(y),x\rangle| dy.
$$
The unit ball of this norm is the polar projection body of $f$, which will be denoted by $\Pi^*(f)$.
\end{proposition}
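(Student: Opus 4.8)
The plan is to verify, in order, three things: (i) that the expression $\Vert x\Vert=2|x|\int_{x^\perp}\max_{s\in\R}f\left(y+s\frac{x}{|x|}\right)dy$ is positively homogeneous of degree one and satisfies the triangle inequality whenever it is finite, so that it is a genuine norm; (ii) that its unit ball is convex and bounded, so that $\Pi^*(f)$ is a well-defined convex body; and (iii) that for $f\in W^{1,1}(\R^n)$ this expression coincides with $\int_{\R^n}|\langle\nabla f(y),x\rangle|\,dy$. Homogeneity is immediate: replacing $x$ by $\lambda x$ with $\lambda>0$ scales $|x|$ by $\lambda$ and leaves the direction $x/|x|$, hence the integral over $x^\perp$, unchanged; the case $\lambda<0$ follows since $x^\perp=(-x)^\perp$ and $\max_{s}f(y+s\tfrac{x}{|x|})=\max_s f(y-s\tfrac{x}{|x|})$. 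For the triangle inequality I would pass through the identity in (iii): once we know $\Vert x\Vert=\int_{\R^n}|\langle\nabla f(y),x\rangle|\,dy$ on the dense subclass $\mathcal{C}_{00}^1$ (or $W^{1,1}$), subadditivity is trivial because $x\mapsto|\langle\nabla f(y),x\rangle|$ is a seminorm for each fixed $y$ and integration preserves subadditivity; the general log-concave integrable case then follows by an approximation argument, approximating $f$ from below by smooth log-concave functions with compact support (e.g. truncating and mollifying $e^{-v}$), using monotone convergence to pass the identity and hence the triangle inequality to the limit.

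The geometric heart is the identity in (iii), which is a Fubini computation combined with the structure of log-concave functions. Fix a unit vector $\theta=x/|x|$ and write points of $\R^n$ as $y+s\theta$ with $y\in\theta^\perp$, $s\in\R$. By Fubini,
$$
\int_{\R^n}|\langle\nabla f(z),x\rangle|\,dz=|x|\int_{\theta^\perp}\int_\R\left|\frac{\partial}{\partial s}f(y+s\theta)\right|ds\,dy.
$$
For fixed $y$, the function $s\mapsto f(y+s\theta)$ is log-concave on $\R$, hence unimodal: it increases up to some point $s^*(y)$ where it attains its maximum $M(y):=\max_s f(y+s\theta)$ and decreases afterwards (allowing degenerate cases where the max is attained on an interval or at $\pm\infty$, which for an integrable function forces the function to vanish there). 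Therefore the total variation of $s\mapsto f(y+s\theta)$ equals $2M(y)$ minus the boundary values at $\pm\infty$, which are zero by integrability, giving $\int_\R|\partial_s f(y+s\theta)|\,ds=2M(y)$. Substituting back yields exactly $2|x|\int_{\theta^\perp}\max_s f(y+s\theta)\,dy=\Vert x\Vert$. The only subtlety is justifying that the one-dimensional total variation equals the integral of $|\partial_s f|$ — this needs $f(y+\cdot\,\theta)\in W^{1,1}(\R)$ for a.e. $y$, which follows from $f\in W^{1,1}(\R^n)$ and Fubini for Sobolev functions, together with the absolute continuity on lines characterization.

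For (ii), convexity of the unit ball is automatic once $\Vert\cdot\Vert$ is a norm; boundedness (so that $\Pi^*(f)$ is a body, not merely a convex set) follows from the hypothesis that $\Vert x\Vert$ is finite and positive for every $x\neq 0$, together with continuity of $x\mapsto\Vert x\Vert$ on the sphere — continuity itself can be read off from the dominated convergence theorem applied to the representation $\Vert x\Vert=\int_{\R^n}|\langle\nabla f(z),x\rangle|\,dz$ in the $W^{1,1}$ case and from lower semicontinuity plus the approximation in general; a positive minimum on $S^{n-1}$ then bounds $\Pi^*(f)$. I expect the main obstacle to be the careful handling of the degenerate cases in the one-dimensional total-variation computation — namely when $s\mapsto f(y+s\theta)$ is not differentiable at its peak, is constant on an interval, or (before $f$ is assumed integrable along that line) fails to decay — and making the approximation argument from smooth compactly supported log-concave functions rigorous so that the norm identity and the triangle inequality transfer to all log-concave integrable $f$. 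Everything else is bookkeeping with Fubini and standard Sobolev facts, for which I would cite \cite{M}.
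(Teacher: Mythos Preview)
Your argument for the gradient identity (iii) via Fubini and the one-dimensional total-variation computation for unimodal functions is correct and is in fact a cleaner, more elementary route than the paper's. The paper instead passes through level sets, writes $2|P_{x^\perp}K_t|$ via Cauchy's formula as $\int_{\partial K_t}|\langle\nu(y),\theta\rangle|\,dH_{n-1}$, and then invokes the coarea formula to collapse the double integral to $\int_{\R^n}|\langle\nabla f,x\rangle|$. Your approach avoids the coarea formula entirely; the paper's has the advantage of making the link to the classical $\Pi^*(K_t)$ explicit.

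Where your proposal takes an unnecessary detour is in the proof that $\Vert\cdot\Vert$ is a norm. You plan to deduce subadditivity from the gradient representation, which forces you into an approximation argument for general log-concave integrable $f$ (not assumed to be in $W^{1,1}$); you yourself flag this as the main obstacle, and indeed mollification from below with monotone convergence of the quantity $\int_{x^\perp}\max_s f$ is delicate (mollification need not stay below $f$, and convergence of the marginal maxima requires justification). The paper bypasses all of this with a one-line layer-cake argument: writing $\max_s f(y+s\theta)$ via its superlevel sets one sees
\[
\Vert x\Vert = 2|x|\,\Vert f\Vert_\infty\int_0^1 |P_{x^\perp}K_t(f)|\,dt
= 2\Vert f\Vert_\infty\int_0^1 \Vert x\Vert_{\Pi^*(K_t)}\,dt,
\]
so $\Vert\cdot\Vert$ is an average of the norms $\Vert\cdot\Vert_{\Pi^*(K_t)}$ and subadditivity is immediate for every log-concave integrable $f$, with no regularity or approximation needed. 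This decomposition also renders your item (ii) (convexity and boundedness of the unit ball) automatic.
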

\begin{proof}
Notice that
\begin{eqnarray*}
\Vert x\Vert&=&2|x|\int_{x^\perp}\max_{s\in\R}f\left(y+s\frac{x}{|x|}\right)dy\cr
&=&2|x|\int_0^{+\infty}\left|\left\{y\in x^\perp\,:\,\max_{s\in\R}f\left(y+s\frac{x}{|x|}\right)\geq t\right\}\right|dt\cr
&=&2|x|\Vert f\Vert_\infty\int_0^1\left|P_{x^\perp}K_t\right|dt\cr
&=&2\Vert f\Vert_\infty\int_0^1\Vert x\Vert_{\Pi^*(K_t)}dt
\end{eqnarray*}
and it is clear that it is a norm.

If $f\in W^{1,1}(\R^n)$, for almost every $t$ the boundary of $K_t$ is $\{x\in\R^n\,:\,f(x)=t\Vert f\Vert_\infty\}$  and we have
\begin{eqnarray*}
\Vert x\Vert_{\Pi^*(f)}&=&2|x|\Vert f\Vert_\infty\int_0^1\left|P_{x^\perp}K_t\right|dt\cr
&=&|x|\int_0^{\Vert f\Vert_\infty}\int_{\{f(x)=t\}}\left|\left\langle\nu(y),\frac{x}{|x|}\right\rangle\right|dH_{n-1}(y)dt
\end{eqnarray*}
where $\nu(y)$ is the outer normal unit vector to $\{x\in\R^n\,:\,f(x)\geq t\}$ and $dH_{n-1}$ is the Haussdorff measure on the boundary of it. Since $\nu(y)=\frac{\nabla f(y)}{|\nabla f(y)|}$ almost everywhere the above expression is
$$
\int_0^{\Vert f\Vert_\infty}\int_{\{f(x)=t\}}\left|\left\langle\frac{\nabla f(y)}{|\nabla f(y)|},x\right\rangle\right|dH_{n-1}(y)dt
$$
which, by the co-area formula, equals
$$
\int_{\R^n}|\langle\nabla f(y),x\rangle| dy.
$$
\end{proof}

We will use the following lemma to prove Theorem \ref{FunctionalReversePetty}.
\begin{lemma}\label{AsplundProductEuclideanBalls}
Let $f:\R^n\to\R$ be a log-concave function and $g(x)=(B_2^n)^a(x)$. Then
$$
\lim_{\varepsilon\to0^+}f\star g_\varepsilon(z)=f(z)
$$
and
$$
\lim_{\varepsilon\to0^+}\frac{f\star g_\varepsilon(z)-f(z)}{\varepsilon}=|\nabla f(z)|+f(z)\log a\quad \textrm{almost everywhere}.
$$
\end{lemma}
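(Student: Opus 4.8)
The plan is to unravel the definitions until $f\star g_\varepsilon$ becomes an explicit localized supremum of $f$. Since $g=a\chi_{B_2^n}$, one has $g_\varepsilon(x)=g(x/\varepsilon)^\varepsilon=\bigl(a\chi_{B_2^n}(x/\varepsilon)\bigr)^\varepsilon=a^\varepsilon\chi_{\varepsilon B_2^n}(x)$, so that
$$
f\star g_\varepsilon(z)=\max_{y\in\R^n}f(z-y)g_\varepsilon(y)=a^\varepsilon\sup_{|y|\le\varepsilon}f(z-y)=a^\varepsilon M_\varepsilon(z),
$$
where $M_\varepsilon(z):=\sup_{|u|\le1}f(z+\varepsilon u)$ is the supremum of $f$ over the ball $z+\varepsilon B_2^n$ (the supremum is attained whenever $f$ is continuous near $z$, which is the case for a.e.\ $z$; only its value enters the argument). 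For the first assertion I would observe that $M_\varepsilon(z)$ decreases as $\varepsilon\downarrow0$ to $\max\bigl(f(z),\limsup_{w\to z}f(w)\bigr)$, and at every point where $f$ is continuous — in particular at a.e.\ $z$, and at every point if one works with the upper semicontinuous representative — this limit equals $f(z)$; combined with $a^\varepsilon\to1$ this gives $f\star g_\varepsilon(z)\to f(z)$.

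For the derivative statement I would split the difference quotient as
$$
\frac{f\star g_\varepsilon(z)-f(z)}{\varepsilon}=M_\varepsilon(z)\,\frac{a^\varepsilon-1}{\varepsilon}+\frac{M_\varepsilon(z)-f(z)}{\varepsilon}.
$$
The first summand tends to $f(z)\log a$ by the first assertion together with $\tfrac{a^\varepsilon-1}{\varepsilon}\to\log a$, so everything reduces to showing $\tfrac1\varepsilon(M_\varepsilon(z)-f(z))\to|\nabla f(z)|$ for a.e.\ $z$. Since $f=e^{-v}$ with $v$ convex, $v$ — hence $f$ — is differentiable at a.e.\ point of the interior of its support, while $f\equiv0$ (hence differentiable, with vanishing gradient) near any point outside the closed support; so it suffices to treat a point $z$ at which $f$ is differentiable. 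Writing $f(z+h)=f(z)+\langle\nabla f(z),h\rangle+R(h)$ with $|R(h)|\le\eta|h|$ for $|h|$ small, one gets, uniformly over $|u|\le1$, that $\bigl|\,f(z+\varepsilon u)-f(z)-\varepsilon\langle\nabla f(z),u\rangle\,\bigr|\le\eta\varepsilon$; taking the supremum over $|u|\le1$ and using $\sup_{|u|\le1}\langle\nabla f(z),u\rangle=|\nabla f(z)|$ (witnessed by $u=\nabla f(z)/|\nabla f(z)|$, or trivially when $\nabla f(z)=0$ together with $M_\varepsilon(z)\ge f(z)$) yields $\bigl|\,M_\varepsilon(z)-f(z)-\varepsilon|\nabla f(z)|\,\bigr|\le\eta\varepsilon$ for $\varepsilon$ small, i.e.\ $\tfrac1\varepsilon(M_\varepsilon(z)-f(z))\to|\nabla f(z)|$. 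Assembling the two summands gives $\tfrac1\varepsilon(f\star g_\varepsilon(z)-f(z))\to|\nabla f(z)|+f(z)\log a$ for a.e.\ $z$.

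The routine parts are the elementary limits $a^\varepsilon\to1$, $\tfrac{a^\varepsilon-1}{\varepsilon}\to\log a$, and the monotone convergence $M_\varepsilon(z)\to f(z)$. The one point that needs care — and which I regard as the main obstacle — is the upper estimate $M_\varepsilon(z)-f(z)\le\varepsilon\bigl(|\nabla f(z)|+o(1)\bigr)$: it requires the linearization error $R(\varepsilon u)$ to be $o(\varepsilon)$ \emph{uniformly} over the whole unit sphere, not merely along a fixed direction. This uniformity is exactly what Fréchet differentiability of $f$ at $z$ delivers, so no extra hypothesis is needed; one only has to be a little careful to isolate the null set of non-differentiability (which contains the boundary $\partial\{f>0\}$) and to handle the degenerate case $\nabla f(z)=0$ separately, where the conclusion $\tfrac1\varepsilon(M_\varepsilon(z)-f(z))\to0$ comes from combining the upper estimate with $M_\varepsilon(z)\ge f(z)$.
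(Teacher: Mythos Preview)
Your proof is correct and follows essentially the same approach as the paper: both express $f\star g_\varepsilon(z)$ as $a^\varepsilon$ times the local supremum $\sup_{y\in B_2^n}f(z-\varepsilon y)$, split the difference quotient into an $a^\varepsilon$-part (contributing $f(z)\log a$) and a supremum-part (contributing $|\nabla f(z)|$), and invoke differentiability of $f$ at $z$ for the latter. Your algebraic splitting $a^\varepsilon M_\varepsilon-f=M_\varepsilon(a^\varepsilon-1)+(M_\varepsilon-f)$ differs only trivially from the paper's $a^\varepsilon M_\varepsilon-f=a^\varepsilon(M_\varepsilon-f)+f(a^\varepsilon-1)$, and you are in fact more explicit than the paper about why Fr\'echet differentiability gives the required \emph{uniform} (over $|u|\le1$) control of the remainder.
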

\begin{proof}
By definition of the Asplund product, since $f$ is continuous,
$$
\lim_{\varepsilon\to0^+}f\star g_\varepsilon(z)=\lim_{\varepsilon\to0^+}\sup_{z=x+y}f(x)a^\varepsilon\chi_{B_2^n}\left(\frac{y}{\varepsilon}\right)=\lim_{\varepsilon\to0^+}\sup_{y\in B_2^n}f(z-\varepsilon y)a^\varepsilon=f(z).
$$
Besides, if $f$ is differentiable in $z$,
\begin{eqnarray*}
\lim_{\varepsilon\to0^+}\frac{f\star g_\varepsilon(z)-f(z)}{\varepsilon}&=&\lim_{\varepsilon\to0^+}\sup_{y\in B_2^n}\frac{f(z-\varepsilon y)a^\varepsilon-f(z)a^\varepsilon+f(z)a^\varepsilon-f(z)}{\varepsilon}\cr
&=&\lim_{\varepsilon\to0^+}\sup_{y\in B_2^n}\frac{f(z-\varepsilon y)a^\varepsilon-f(z)a^\varepsilon}{\varepsilon}+f(z)\lim_{\varepsilon\to0^+}\frac{a^\varepsilon-1}{\varepsilon}.\cr
\end{eqnarray*}
Since
$$
\lim_{\varepsilon\to0^+}\sup_{y\in B_2^n}\frac{f(z-\varepsilon y)-f(z)}{\varepsilon}=|\nabla f(z)|,
$$
the previous limit equals $|\nabla f(z)|+f(z)\log a$.
\end{proof}

The following lemma was proved in \cite{CF}. We reproduce it here for the sake of completeness:
\begin{lemma}\label{AsplundProductSameFunction}
Let $f:\R^n\to\R$ be an integrable log-concave function. Then
\begin{eqnarray*}
\lim_{\varepsilon\to0^+}\frac{\int_{\R^n}f\star f_\varepsilon(x)dx-\int_{\R^n}f(x)dx}{\varepsilon}&=&n\int_{\R^n}f(x)dx+\int_{\R^n}f(x)\log f(x)dx\cr
\end{eqnarray*}
\end{lemma}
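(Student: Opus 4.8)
The plan is to recognize $f\star f_\varepsilon$ as a pure rescaling of $f$, which reduces the statement to an elementary differentiation under the integral sign.

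Write $f=e^{-v}$ with $v\colon\R^n\to(-\infty,+\infty]$ convex, so that $f_\varepsilon(y)=e^{-\varepsilon v(y/\varepsilon)}$. The first step is to establish the identity
\[
f\star f_\varepsilon(z)=f\Bigl(\tfrac{z}{1+\varepsilon}\Bigr)^{1+\varepsilon}=f_{1+\varepsilon}(z).
\]
For the bound $f\star f_\varepsilon(z)\le f(z/(1+\varepsilon))^{1+\varepsilon}$, fix any decomposition $z=x+y$ and apply convexity of $v$ to the points $x$ and $y/\varepsilon$ with weights $\tfrac1{1+\varepsilon}$ and $\tfrac{\varepsilon}{1+\varepsilon}$, obtaining $v(x)+\varepsilon v(y/\varepsilon)\ge(1+\varepsilon)v\bigl(\tfrac{z}{1+\varepsilon}\bigr)$ and hence $f(x)f_\varepsilon(y)\le f(z/(1+\varepsilon))^{1+\varepsilon}$. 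The reverse inequality follows by choosing $x=\tfrac{z}{1+\varepsilon}$, $y=\tfrac{\varepsilon z}{1+\varepsilon}$, for which this estimate is an equality. A linear change of variables $x\mapsto(1+\varepsilon)x$ then yields
\[
\int_{\R^n}f\star f_\varepsilon(x)\,dx=(1+\varepsilon)^n\int_{\R^n}f(x)^{1+\varepsilon}\,dx,
\]
so the quantity to compute is $\frac{d}{d\varepsilon}\big|_{\varepsilon=0^+}\bigl[(1+\varepsilon)^n\int_{\R^n}f^{1+\varepsilon}\bigr]$. Decomposing the difference quotient as $\frac{(1+\varepsilon)^n-1}{\varepsilon}\int_{\R^n}f+(1+\varepsilon)^n\cdot\frac1\varepsilon\bigl(\int_{\R^n}f^{1+\varepsilon}-\int_{\R^n}f\bigr)$, the first summand tends to $n\int_{\R^n}f$, and it remains to show $\frac1\varepsilon\bigl(\int_{\R^n}f^{1+\varepsilon}-\int_{\R^n}f\bigr)\to\int_{\R^n}f\log f$.

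This last limit I would obtain by differentiating under the integral sign. Pointwise, $\frac{f(x)^{1+\varepsilon}-f(x)}{\varepsilon}=f(x)\frac{f(x)^\varepsilon-1}{\varepsilon}\to f(x)\log f(x)$ as $\varepsilon\to0^+$, with the convention $0\log0=0$. For a dominating function, using $t^\varepsilon-1\ge\varepsilon\log t$ for $0<t\le1$ and $t^\varepsilon-1\le\varepsilon(t-1)$ for $t\ge1$, $\varepsilon\in(0,1]$ (both by convexity of $\varepsilon\mapsto t^\varepsilon$), one gets $\bigl|\tfrac{f^{1+\varepsilon}-f}{\varepsilon}\bigr|\le|f\log f|+f^2$, and both $f\log f$ and $f^2$ lie in $L^1(\R^n)$ since $f$ is bounded with at least exponentially decaying tails (so $f^2\le\|f\|_\infty f$ is integrable, and likewise for $f\log f$). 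Dominated convergence then gives the claim, and assembling the pieces yields $n\int_{\R^n}f+\int_{\R^n}f\log f$.

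I expect the only real obstacle to be this last domination step, and in particular the routine verification that $f\log f\in L^1(\R^n)$; everything else is the exact identity $f\star f_\varepsilon=f_{1+\varepsilon}$ together with a change of variables and the product rule. (Since the lemma is attributed to \cite{CF}, one could alternatively just cite it, but the argument above is short enough to include.)
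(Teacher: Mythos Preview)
Your argument is correct and follows the same route as the paper: both establish the identity $f\star f_\varepsilon=f_{1+\varepsilon}$ from the convexity of $v=-\log f$, change variables to obtain $(1+\varepsilon)^n\int f^{1+\varepsilon}$, and then split the difference quotient into the piece carrying $\frac{(1+\varepsilon)^n-1}{\varepsilon}$ and the piece $\int\frac{f^{1+\varepsilon}-f}{\varepsilon}$. The only cosmetic difference is in justifying this last limit: the paper normalizes so that $u\ge 0$ (a ``translation of $u$'') and invokes monotone convergence, whereas you supply the explicit dominating function $|f\log f|+f^2$ and use dominated convergence together with the standard exponential decay of integrable log-concave functions.
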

\begin{proof}
First of all, notice that if $f(x)=e^{-u(x)}$ with $u$ a convex function, then
$$
f\star f_\varepsilon(z)=e^{-(1+\varepsilon) u\left(\frac{z}{1+\varepsilon}\right)},
$$
since, as $u$ is convex, its epigraph $\textrm{epi}\,u$ is a convex set and then
\begin{eqnarray*}
\inf_{z=x+y} u(x)+\varepsilon u\left(\frac{y}{\varepsilon}\right)&=&\inf_{z=x+\varepsilon y} u(x)+\varepsilon u(y)\cr
&=&\inf\{\mu\,:\,(z,\mu)\in (1+\varepsilon)\textrm{epi}\,u\}\cr
&=&(1+\varepsilon)u\left(\frac{z}{1+\varepsilon}\right).
\end{eqnarray*}
Then,
\begin{eqnarray*}
\frac{\int_{\R^n}f\star f_\varepsilon(x)dx-\int_{\R^n}f(x)dx}{\varepsilon}&=&\frac{1}{\varepsilon}\left((1+\varepsilon)^n\int_{\R^n}e^{-(1+\varepsilon)u(x)}dx-\int_{\R^n}e^{-u(x)}dx\right)\cr
&=&\left(\frac{(1+\varepsilon)^n-1}{\varepsilon}\right)\int_{\R^n}e^{-(1+\varepsilon)u(x)}dx\cr
&+&\int_{\R^n}e^{-u(x)}\left(\frac{e^{-\varepsilon u(x)}-1}{\varepsilon}\right)dx.
\end{eqnarray*}
Now, taking limit when $\varepsilon$ tends to 0 we obtain the result. The monotone convergence theorem and possibly a translation of the function $u$ allows us to interchange limits.
\end{proof}

Now we are able to prove Theorem \ref{FunctionalReversePetty}:
\begin{proof}
Since all the quantities in the statement of the theorem are affine invariant, {\it i.e.}, they take the same value for $f$ and for $f\circ T$, we can assume that $f$ is in John's position. That is, $\mathcal{E}(f)=(B_2^n)^{t_0\Vert f\Vert_\infty}$. On the one hand, by Jensen's inequality
\begin{eqnarray*}
|\Pi^*(f)|^\frac{1}{n}&=&|B_2^n|^\frac{1}{n}\left(\int_{S^{n-1}}\left(\int_{\R^n}|\langle\nabla f(z),\theta\rangle|dz\right)^{-n}d\sigma(\theta)\right)^\frac{1}{n}\cr
&\geq&|B_2^n|^\frac{1}{n}\left(\int_{S^{n-1}}\int_{\R^n}|\langle\nabla f(z),\theta\rangle|dzd\sigma(\theta)\right)^{-1}\cr
&=&|B_2^n|^\frac{1}{n}\left(\frac{2}{n}\frac{|B_2^{n-1}|}{|B_2^n|}\int_{\R^n}|\nabla f(z)|dz\right)^{-1}.
\end{eqnarray*}
On the other hand, let $g(x)=\mathcal{E}(f)(x)$. By Lemma \ref{AsplundProductEuclideanBalls}, we have that
\begin{eqnarray*}
|\nabla f(z)|+f(z)\log (t_0\Vert f\Vert_\infty)&=&\lim_{\varepsilon\to0^+}\frac{f\star g_\varepsilon(z)-f(z)}{\varepsilon}\cr
&\leq&\lim_{\varepsilon\to0^+}\frac{f\star f_\varepsilon(z)-f(z)}{\varepsilon}\cr
\end{eqnarray*}
By Lemma \ref{AsplundProductSameFunction}, integrating in $z\in\R^n$ we have that
\begin{eqnarray*}
\int_{\R^n}|\nabla f(z)|dz&+&\int_{\R^n}f(z)dz\log (t_0\Vert f\Vert_\infty)
\leq n\int_{\R^n}f(z)dz+\int_{\R^n}f(z)\log f(z)dz.
\end{eqnarray*}
Then
$$
\int_{\R^n}|\nabla f(z)|dz\leq n\int_{\R^n}f(z)dz+\int_{\R^n}f(z)\log \frac{f(z)}{t_0\Vert f\Vert_\infty}dz.
$$
Consequently, $\frac{\Vert f\Vert_{\frac{n}{n-1}}|\Pi^*(f)|^\frac{1}{n}}{\left(\frac{|B_2^n|}{2|B_2^{n-1}|}\right)}$ is bounded below by
$$
\left((t_0\Vert f\Vert_\infty)^\frac{1}{n} I.rat(f)\left[\left(\frac{\int_{\R^n}f(x)dx}{\int_{\R^n}f^\frac{n}{n-1}(x)dx}\right)^\frac{n-1}{n}+\frac{\int_{\R^n}f(x)\log\left(\frac{f(x)}{t_0\Vert f\Vert_\infty}\right)^\frac{1}{n}dx}{\Vert f\Vert_{\frac{n}{n-1}}\Vert f\Vert_1^\frac{1}{n}}\right]\right)^{-1}.
$$
Since $t_0\geq e^{-n}$, we can write $t_0=e^{-s_0 n}$ for some $s_0\in[0,1]$ and then
\begin{eqnarray*}
&&t_0^\frac{1}{n} \left[\left(\frac{\int_{\R^n}f(x)dx}{\int_{\R^n}f^\frac{n}{n-1}(x)dx}\right)^\frac{n-1}{n}+\frac{\int_{\R^n}f(x)\log\left(\frac{f(x)}{t_0\Vert f\Vert_\infty}\right)^\frac{1}{n}dx}{\Vert f\Vert_{\frac{n}{n-1}}\Vert f\Vert_1^\frac{1}{n}}\right]\cr
&=&e^{-s_0}\left[(1+s_0)\left(\frac{\int_{\R^n}f(x)dx}{\int_{\R^n}f^\frac{n}{n-1}(x)dx}\right)^\frac{n-1}{n}+\frac{\int_{\R^n}f(x)\log\left(\frac{f(x)}{\Vert f\Vert_\infty}\right)^\frac{1}{n}dx}{\Vert f\Vert_{\frac{n}{n-1}}\Vert f\Vert_1^\frac{1}{n}}\right].
\end{eqnarray*}
Since the maximum of $g(s)=e^{-s}\left[(1+s)A+B\right]$ with $A\geq0$, $B\leq 0$ and $s\geq 0$ is attained when $s=\frac{-B}{A}$ we have that $\frac{\Vert f\Vert_{\frac{n}{n-1}}|\Pi^*(f)|^\frac{1}{n}}{\left(\frac{|B_2^n|}{2|B_2^{n-1}|}\right)}$ is bounded below by
$$
\left(e^{\frac{\int_{\R^n}f(x)\log\left(\frac{f(x)}{\Vert f\Vert_\infty}\right)^\frac{1}{n}dx}{\int_{\R^n}f(x)dx}}\Vert f\Vert_\infty^\frac{1}{n}\left(\frac{\int_{\R^n}f(x)dx}{\int_{\R^n}f^\frac{n}{n-1}(x)dx}\right)^\frac{n-1}{n}I.rat(f)\right)^{-1}.
$$
\end{proof}

\end{document}